\newtheorem{theorem}{Theorem}[section]
\newtheorem{lemma}[theorem]{Lemma}
\theoremstyle{definition}
\theoremstyle{remark}
\newtheorem{remark}[theorem]{Remark}
\numberwithin{equation}{section}
\newcommand{\Q}{{\mathbb Q}}
\newcommand{\Z}{{\mathbb Z}}
\newcommand{\Li}{\textnormal{Li}}
\newcommand{\rd}{{\rm d}}
\newcommand{\li}{\textnormal{li}}
\begin{document}

\title[Improved error bounds for the Fermat primality test]{Improved error bounds for the Fermat primality test on random inputs}
%Improved bounds on random probable primes]{Improved bounds on the probability that a random probable prime is composite}

%    Information for first author
\author{Jared D. Lichtman}
%    Address of record for the research reported here
\address{Department of Mathematics, Dartmouth College, Hanover, NH 03755}

\email{lichtman.18@dartmouth.edu}
%    \thanks will become a 1st page footnote.
%\thanks{The first author was supported in part by NSF Grant \#000000.}

%    Information for second author
\author{Carl Pomerance}
\address{Department of Mathematics, Dartmouth College, Hanover, NH 03755}
\email{carl.pomerance@dartmouth.edu}
%\thanks{Support information for the second author.}

%    General info
\subjclass[2000]{Primary 11Y11; Secondary 11A51}

%\date{January 1, 2001 and, in revised form, June 22, 2001.}

%\dedicatory{This paper is dedicated to}

\keywords{Fermat test, Miller--Rabin test, probable prime}

\begin{abstract}
We investigate the probability that a random odd composite number passes a random
Fermat primality test, improving on earlier estimates in moderate ranges.  For 
example, with random numbers to $2^{200}$, our results improve
on prior estimates by close to 3 orders of magnitude.  
\end{abstract}

\maketitle

%% The correct journal style for \specialsection is all uppercase; a known bug
%% in amsart.cls prevents this, so input must be uppercase until it is fixed.

\section{Introduction}
Part of the basic landscape in elementary number theory is the Fermat congruence:
If $n$ is a prime and $1\le b\le n-1$, then
\begin{equation}
\label{eq:fermat}
b^{n-1}\equiv1\pmod n.
\end{equation}
It is attractive in its simplicity and ease of verification: using fast arithmetic subroutines,
\eqref{eq:fermat} may be checked in $(\log n)^{2+o(1)}$ bit operations.  
Further,
its converse (apparently) seldom lies.  In practice, if one has a large random
number $n$ that satisfies \eqref{eq:fermat} for a random choice for $b$, then almost
certainly $n$ is prime.  To be sure, there are infinitely many composites (the Carmichael numbers)
that satisfy \eqref{eq:fermat}
for all $b$ coprime to $n$, see \cite{AGP1}.  And in \cite{AGP2} it is shown that there are infinitely
many Carmichael numbers $n$ such that \eqref{eq:fermat} holds for $(1-o(1))n$ choices
for $b$ in $[1,n-1]$. 
(Specifically, for each fixed $k$ there are infinitely many Carmichael numbers $n$ such that the probability a random $b$ in $[1,n-1]$ has $(b,n)>1$ is less than $1/\log^k n$.)
 However, Carmichael numbers are rare, and if a number $n$ is chosen
at random, it is unlikely to be one.

%There are many algorithms which can be used to decide if an integer $n$ is prime or composite. The Fermat primality test is a fast and simple method the is usually recommended as a preliminary procedure before a more time-consuming test is attempted. Namely, one chooses a random integer $b, 1<b<n-1$, and checks if $b^{n-1}\equiv 1\pmod{n}$. If $n$ is prime, then this congruence must hold by Fermat's little theorem. If this congruence holds, then $n$ is called a \textit{probable prime to the base} $b$. This procedure proves that $n$ is composite if $n$ fails, but does not prove $n$ is prime if $n$ passes.

We say $n$ is a {\em probable prime to the base} $b$ if \eqref{eq:fermat} holds.  A probable
prime is either prime or composite, but the terminology certainly suggests that it is probably
prime!  Specifically, let $P(x)$ denote the probability that an integer $n$ is composite given that
\begin{enumerate}
\item[(i)] $n$ is chosen at random with $1<n\le x$, $n$ odd,
\item[(ii)] $b$ is chosen at random with $1<b<n-1$, and
\item[(iii)] $n$ is a probable prime to the base $b$.
\end{enumerate}
%It is well known that there are composite numbers $n$, namely the Carmichael numbers, such that (iii) holds for every $b$ coprime to $n$. However, Carmichael numbers are rare.

%In practice, if a large random integer $n$ passes the Fermat test, one strongly conjectures that $n$ is prime. One of the aims of this paper is to strengthen the notion that, as Henri Cohen put it, such $n$ is an ``industrial grade prime.'' That is, while those $n$ that pass the Fermat test are not proven prime, the probability that $n$ is composite is so low that $n$ could be used as a prime for cryptography and other industrial purposes.

It is known that if $x$ is sufficiently large, then $P(x)$ is small. Indeed,
Erd\H{o}s and Pomerance \cite[Theorem 2.2]{EP} proved that
\begin{equation} \label{eq:EP}
    P(x) \le \exp(-(1+o(1))\log x\log\log\log x / \log\log x)
\end{equation}
as $x\to\infty$. In particular, $\lim P(x) = 0$. Kim and Pomerance \cite{KP} replaced the asymptotic inequality of (\ref{eq:EP}) with the weaker, but explicit, inequality
$$
    P(x) \le (\log x)^{-197} \quad\text{ for }x\ge 10^{10^5}
$$
and gave numerical bounds on $P(x)$ for $10^{60}\le x < 10^{10^5}$. In this paper we simplify the argument in \cite{KP} and obtain better upper bounds on $P(x)$ for $10^{60}\le x \le 10^{90}$, as seen in Figure \ref{fig:upp}. 
In particular, at the start of this range, our bound is over 700 times smaller.

\begin{figure}[H]
  \caption{New bounds on $P(x)$.} \label{fig:upp}
  \[\begin{array}{ccccc}
      & \text{Bound on }          &    \text{New bound} \\
    x &P(x) \text{ in \cite{KP}} & \text{on } P(x) \\
    \hline
    10^{60} & 7.16E{-}2 & 1.002E{-}4\\
    10^{70} & 2.87E{-}3 & 1.538E{-}5\\
    10^{80} & 8.46E{-}5 & 2.503E{-}6\\
    10^{90} & 1.70E{-}6 & 4.304E{-}7\\
    10^{100} & 2.77E{-}8 & 7.798E{-}8
\end{array}\]
\end{figure}

\noindent The notation $aEm$ means $a\times10^{m}$.

With these methods, we also obtain new nontrivial bounds for $2^{40}\le x < 10^{60}$, values of $x$ smaller than the methods in \cite{KP} could handle. These results are included in Figure \ref{fig:uppp}. 
%Since the bounds obtained in this paper are functions decreasing in $x$, 
%one has an upper estimate for $P(x)$ for values of $x$ between consecutive entries.
%To do so, one can plug in $x$ into the function obtained in Theorem \ref{thm:Prob} (\ref{thm:prob}, if $2^{50}\le x \le 2^{140}$) using the parameter values for that of $x_0$, which are found in Figure \ref{fig:up} at the end of this paper.

\begin{figure}[H]
  \caption{Upper bound on $P(2^k)$.} \label{fig:uppp}
  \[\begin{array}{cc|cc|cc}
    k &  P(2^k)\le & k &  P(2^k)\le & k & P(2^k)\le \\
    \hline
    40 & 4.306E{-}1 &
    140 & 3.265E{-}3 
& 240 & 1.017E{-}5 \\ 
50 & 2.904E{-}1 &
    150 & 1.799E{-}3 
& 250 & 5.876E{-}6 \\
    60 & 1.848E{-}1 &
    160 & 9.933E{-}4 
& 260 & 3.412E{-}6 \\
    70 & 1.127E{-}1 &
    170 & 5.505E{-}4 
& 270 & 1.992E{-}6 \\
    80 & 6.728E{-}2 &
    180 & 3.064E{-}4 
& 280 & 1.169E{-}6 \\
    90 & 4.017E{-}2
   & 190 & 1.714E{-}4 
& 290 & 6.888E{-}7 \\
    100 & 2.388E{-}2 
    & 200 & 9.634E{-}5 
& 300 & 4.080E{-}7\\
    110 & 1.435E{-}2 
   &  210 & 5.447E{-}5 
& 310 & 2.428E{-}7 \\
    120 & 8.612E{-}3 
& 220 & 3.097E{-}5 
& 320 & 1.451E{-}7 \\ 
    130 & 5.229E{-}3 
& 230 & 1.770E{-}5 
& 330 & 8.713E{-}8 \\  
%& 340 & 5.9013E{-}8 \\
%& 350 & 3.5683E{-}8 \\
\end{array}\]
\end{figure}

We compute the exact values of $P(x)$ for $x = 2^k$ with $3\le k \le 36$. Additionally, we estimate $P(x)$ for $x = 2^k$ with $30\le k \le 50$, using random sampling. Calibrating these estimates against the true values for $30\le k \le 36$ suggest that the estimates are fairly close to the true values for $37\le k \le 50$, and almost certainly within an order of magnitude from the truth.

A number $n$ is called $L$-smooth if all of its prime factors are bounded above by $L$.
The method of \cite{KP} first computes the contribution to $P(x)$ from numbers
that are not $L$-smooth (for an appropriate choice for $L$), and then enters a complicated
argument based on the asymptotic method of \cite{EP} for the contribution of the
$L$-smooth numbers.  In addition to small improvements made in the non-$L$-smooth
case, our principal new idea is to use merely that there are few $L$-smooth numbers.
For this we use the upper bound method pioneered
by Rankin in \cite{Rank} for this problem, obtaining numerically explicit upper bounds on sums over $L$-smooth numbers, c.f.\ equation \eqref{eq:first} and Remark \ref{rmk:Rankin}. These upper bounds should prove useful in other contexts.

One possible way to gain an improvement is to replace the Fermat test with the strong probable prime test of Selfridge.  Also known as the Miller--Rabin test, it is just as simple to perform and it returns fewer false positives. To describe this test, let $n>1$ be an odd number. First one computes $s,t$ with $n-1=2^s t$ and $t$ odd. Next, one chooses a number $b$, $1\le b\le n-1$. The number $n$ passes the test (and is called a \textit{strong probable prime to the base b}) if either
\begin{equation}
\label{eq:spp}
    b^t\equiv1\pmod{n}\quad\text{or}\quad b^{2^it}\equiv-1\pmod{n}\quad\text{for some }i<s. 
\end{equation}
Every odd prime must pass this test. Moreover, Monier \cite{M} and Rabin \cite{R} have shown that if $n$ is an odd composite, then the probability that it is a strong probable prime to a random base $b$ in $[1,n-1]$ is less than $\frac{1}{4}$.

Let $P_1(x)$ denote the same probability as $P(x)$, except that (iii) is replaced by
\begin{enumerate}
    \item[(iii)$'$] $n$ is a strong probable prime to the base $b$.
\end{enumerate}
Based on the Monier-Rabin theorem, one might assume that $P_1(x)\le \frac{1}{4}$, but as noted in \cite{B}, this reasoning is flawed.  However, in \cite{Bu} and \cite{DLP}, something similar to $P_1(x)\le\frac14$ is shown.
Namely, if $P_1'(2^k)$ is the analogous probability for odd $k$-bit integers, 
it is shown in \cite{Bu}, \cite{DLP} that $P_1'(2^k)\le\frac14$
for all $k\ge3$.   We show below how our estimates can be used to numerically
bound $P_1(x)$.  In particular, the results here improve on the estimates of
\cite{DLP} up to $2^{300}$.

\section*{Notation}
We have $(a,b)$, $[a,b]$ as the greatest common divisor, least common
multiple of the positive integers $a,b$, respectively.
We use $p$ and $q$ to denote prime numbers, and $p_i$ to denote the $i$th prime. For $n>1$, we let $P^+(n)$ denote the largest prime factor of $n$. 
Let $\varphi$ denote Euler's function, $\lambda$ the Carmichael universal exponent function, 
$\zeta$ the Riemann zeta-function,
$\Li(x) = \int_2^x \frac{\rd t}{\log t}$, and $\vartheta(x) = \sum_{p\le x}\log p$. In many instances, we take a sum over certain subsets of odd composite integers, in which cases we use $\sum'_n$ to denote $\sum_{\substack{n\text{ odd,}\\\text{composite}}}$.

\section{Preliminary lemmas}
In this section, we prove some preliminary lemmas which are needed for the rest of the paper, and which may be of interest in their own right.

\begin{lemma} \label{lma:S2I}
Given real numbers $a,b$ and a nonnegative, decreasing function $f$ on the interval $[a,b]$, we have that
$$\int_{\lceil a\rceil}^{b} f(t)\;\rd t\le\sum_{a\le n\le b} f(n) \le f(a) + \int_a^b f(t)\;\rd t.$$
\end{lemma}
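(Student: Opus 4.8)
The plan is to exploit monotonicity by comparing each summand $f(n)$ with the integral of $f$ over a unit interval lying immediately to its left or right, and then to telescope. Write $m=\lceil a\rceil$ and $N=\lfloor b\rfloor$, so that the sum $\sum_{a\le n\le b}f(n)$ is exactly $\sum_{n=m}^{N}f(n)$. If $m>N$ there is no integer in $[a,b]$, the sum is empty, and both inequalities hold trivially because $f\ge 0$ forces $\int_{\lceil a\rceil}^{b}f\,\rd t\le 0\le f(a)+\int_a^b f\,\rd t$; so I may assume $m\le N$.

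For the upper bound, since $f$ is decreasing we have $f(n)\le f(t)$ for every $t\in[n-1,n]$, hence $f(n)\le\int_{n-1}^{n}f(t)\,\rd t$. Summing this over $n=m+1,\dots,N$ telescopes to $\sum_{n=m+1}^{N}f(n)\le\int_{m}^{N}f(t)\,\rd t$, and since $m\ge a$, $N\le b$, and $f\ge 0$, the right side is at most $\int_a^b f(t)\,\rd t$. The remaining term $f(m)=f(\lceil a\rceil)$ is at most $f(a)$ because $\lceil a\rceil\ge a$ and $f$ is decreasing. Adding the two estimates yields $\sum_{n=m}^N f(n)\le f(a)+\int_a^b f(t)\,\rd t$.

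For the lower bound I reverse the comparison: $f(t)\le f(n)$ for $t\in[n,n+1]$ gives $\int_{n}^{n+1}f(t)\,\rd t\le f(n)$, and summing over $n=m,\dots,N-1$ telescopes to $\int_{m}^{N}f(t)\,\rd t\le\sum_{n=m}^{N-1}f(n)$. It remains to absorb the fractional tail $\int_{N}^{b}f\,\rd t$; since $f(t)\le f(N)$ on $[N,b]$ and $b-N<1$, this tail is at most $f(N)$. Combining, $\int_{\lceil a\rceil}^{b}f\,\rd t=\int_m^N f\,\rd t+\int_N^b f\,\rd t\le\sum_{n=m}^{N-1}f(n)+f(N)=\sum_{n=m}^N f(n)$, which is the desired inequality.

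The argument is essentially bookkeeping once monotonicity is invoked; the only points needing care are the two boundary contributions — the leading term $f(\lceil a\rceil)$ in the upper bound and the fractional tail $\int_N^b f\,\rd t$ in the lower bound — together with the degenerate case of an empty sum, which is why I isolate them above. I do not expect any deeper obstacle.
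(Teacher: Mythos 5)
Your proof is correct and is exactly the standard unit-interval comparison argument the paper has in mind; the paper itself omits the details, stating only that ``the proof is clear.'' The only (harmless) quibble is that in the empty-sum case $\lceil a\rceil>b$ the integral $\int_{\lceil a\rceil}^{b}f$ runs over points outside the domain of $f$, a degenerate situation the lemma is never applied in.
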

The proof is clear.  Note that since
$\sum_{a< n\le b} f(n)\le\sum_{a\le n\le b} f(n)$, we may apply the upper bound for the sum on the half open interval.
%\begin{proof}
%Let $x=\lceil a\rceil, y=\lfloor b\rfloor$. The upper bound is shown by
%\begin{align*}
%\sum_{a\le n\le b} f(n) & = \sum_{n=x}^{y} f(n) = f(x) + \sum_{n=x+1}^{y} f(n) \le f(a) + \sum_{n=x+1}^{y}\int_{n-1}^n f(t)\;\rd t \\
%& = f(a) + \int_x^y f(t)\;\rd t \le f(a) + \int_a^b f(t)\;\rd t.
%\end{align*}
%For the lower bound, we have
%\begin{align*}
%\sum_{a\le n\le b} f(n) & = \sum_{n=x}^{y} f(n) \ge \sum_{n=x}^y\int_n^{n+1} f(t)\;\rd t \\
%& = \int_x^{y+1} f(t)\;\rd t \ge \int_{\lceil a\rceil}^{b} f(t)\;\rd t.
%\end{align*}
%\end{proof}

\begin{lemma} \label{lma:phi/d}
For $x\ge 2$, we have that
$$\frac{x}{\zeta(2)} - \log x \le \sum_{n\le x} \frac{\varphi(n)}{n} \le \frac{x}{\zeta(2)} + \log x.$$
\end{lemma}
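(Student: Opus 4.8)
The plan is to expand $\varphi(n)/n$ by M\"obius inversion and interchange summation. Writing $\mu$ for the M\"obius function, $\varphi(n)/n=\sum_{d\mid n}\mu(d)/d$, so grouping $n=dm$ gives
$$\sum_{n\le x}\frac{\varphi(n)}{n}=\sum_{d\le x}\frac{\mu(d)}{d}\Big\lfloor\frac{x}{d}\Big\rfloor.$$
Substituting $\lfloor x/d\rfloor=x/d-\{x/d\}$ and using $\sum_{d=1}^\infty\mu(d)/d^2=1/\zeta(2)$ produces a main term $x/\zeta(2)$ together with two error pieces. The observation I would exploit to streamline these is that for $d>x$ one has $0<x/d<1$, hence $\lfloor x/d\rfloor=0$ and $\{x/d\}=x/d$; this lets the tail $x\sum_{d>x}\mu(d)/d^2$ of the zeta series merge with the fractional-part error into the single clean identity
$$\sum_{n\le x}\frac{\varphi(n)}{n}=\frac{x}{\zeta(2)}-\sum_{d=1}^\infty\frac{\mu(d)}{d}\Big\{\frac{x}{d}\Big\}.$$
The lemma is then equivalent to the bound $\big|\sum_{d\ge1}(\mu(d)/d)\{x/d\}\big|\le\log x$ for $x\ge2$.

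To bound this sum I would split at $d=x$. For $d>x$ the summand equals $(\mu(d)/d)(x/d)$, so this part is at most $x\sum_{d>x}d^{-2}$ in absolute value, which I would control by an integral comparison using Lemma~\ref{lma:S2I} with the decreasing function $f(t)=t^{-2}$. For $d\le x$ I would first record the trivial bound $\{x/d\}\le1$ and apply Lemma~\ref{lma:S2I} with $f(t)=t^{-1}$ to estimate $\sum_{d\le x}1/d$.

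The main obstacle is the constant. Bounding the two pieces by absolute values only yields $\log x+O(1)$, which is never $\le\log x$; indeed the true error $\sum_{d\ge1}(\mu(d)/d)\{x/d\}$ is $O(1)$, far below $\log x$, so any clean bound must survive through cancellation among the M\"obius terms rather than through the triangle inequality. To capture this cancellation I would not discard the sign of $\mu$: I would estimate the tail through $x\sum_{d>x}\mu(d)/d^2=x/\zeta(2)-x\sum_{d\le x}\mu(d)/d^2$ and invoke the elementary inequality $|\sum_{d\le t}\mu(d)/d|\le1$ (itself a consequence of $\sum_{d\le t}\mu(d)\lfloor t/d\rfloor=1$) via partial summation, and I would treat the small-$x$ range---where $\log x$ is smallest and the inequality is tightest, such as near $x=2$---directly and explicitly. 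Reconciling the sign-aware treatment of the oscillating M\"obius sum with the clean target constant is the delicate step I expect to require the most care.
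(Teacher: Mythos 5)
Your opening is exactly the paper's: the M\"obius expansion, the identity
$\sum_{n\le x}\varphi(n)/n=\frac{x}{\zeta(2)}-x\sum_{d>x}\frac{\mu(d)}{d^2}-\sum_{d\le x}\frac{\mu(d)}{d}\{x/d\}$,
and the correct diagnosis that the triangle inequality only gives $\log x+O(1)$, which misses the stated bound by an additive constant. The gap is in your proposed repair. Invoking $|\sum_{d\le t}\mu(d)/d|\le1$ by partial summation only bears on the tail $x\sum_{d>x}\mu(d)/d^2$, where the trivial bound $1/x+1/x^2$ is already adequate (and in fact sharper than what that partial summation yields); it does nothing for the piece that actually causes the overshoot, namely $\sum_{d\le x}\frac{\mu(d)}{d}\{x/d\}$. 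Nor can you run partial summation against $M(t)=\sum_{d\le t}\mu(d)/d$ there: the weight $d\mapsto\{x/d\}$ has total variation of order $\sqrt{x}$ on $[1,x]$ (it sweeps from near $1$ down to $0$ on each of the roughly $2\sqrt{x}$ intervals where $\lfloor x/d\rfloor$ is constant), so that route gives a bound of order $\sqrt{x}$, far worse than $\log x$.

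The idea you are missing is a one-sided sign restriction, not an oscillation estimate. For the upper bound, a term of $-\sum_{d\le x}\frac{\mu(d)}{d}\{x/d\}$ is positive only when $\mu(d)=-1$ (and $d=1$ is excluded since $\mu(1)=1$), so the whole expression is at most $\sum_{1<d\le x,\,\mu(d)=-1}\frac1d\le\sum_{1<d\le x}\frac1d-\sum_{1<d\le18,\,\mu(d)\ne-1}\frac1d<\log x-1.09$, the last sum being computed explicitly as $367/336$. That saving of $1.09$ absorbs the $1+1/x\le1+1/18$ coming from the tail once $x\ge18$, and the range $2\le x<18$ is checked directly. The lower bound is symmetric, restricting to $\mu(d)=1$ and computing the initial terms to $d=4$. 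Your instinct to handle small $x$ separately is right, but without this sign-splitting device (or some equivalent source of an explicit constant exceeding $1+1/x$) the large-$x$ case does not close.
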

\begin{proof}
The result holds for $2\le x<18$, so assume $x\ge18$.
We have that
\begin{equation} \label{eq:lma}
\begin{split}
\sum_{n\le x} \frac{\varphi(n)}{n} & = \sum_{n\le x}\sum_{d\mid n}\frac{\mu(d)}{d} = \sum_{d\le x}\frac{\mu(d)}{d}\sum_{n\le x/d}1 = \sum_{d\le x}\frac{\mu(d)}{d}\left\lfloor\frac{x}{d}\right\rfloor\\
& = x\sum_{d\le x}\frac{\mu(d)}{d^2} - \sum_{d\le x}\frac{\mu(d)}{d}\left\{\frac{x}{d}\right\}\\
& = \frac{x}{\zeta(2)}-x\sum_{d> x}\frac{\mu(d)}{d^2} - \sum_{d\le x}\frac{\mu(d)}{d}\left\{\frac{x}{d}\right\},
\end{split}
\end{equation}
where $\{\ \}$ denotes the frational part.  By Lemma \ref{lma:S2I}, 
\begin{equation} \label{eq:lm}
\begin{split}
\sum_{d>x}\frac{\mu(d)}{d^2} & \le \sum_{d>x}\frac{1}{d^2} 
\le \frac{1}{x^2} + \int_x^\infty\frac{\rd t}{t^2} = \frac{1}{x^2} + \frac{1}{x},\\
\sum_{d>x}\frac{\mu(d)}{d^2} & \ge -\sum_{d>x}\frac{1}{d^2} \ge -\frac{1}{x^2} - \frac{1}{x}.\\
\end{split}
\end{equation}
Since $\sum_{1<d\le18,\,\mu(d)\neq-1}\frac1d=\frac{367}{336}>1.09$, we have
$$
-\sum_{d\le x}\frac{\mu(d)}{d}\left\{\frac{x}{d}\right\}  \le \sum_{\substack{d\le x\\\mu(d)=-1}}\frac{1}{d} \le \sum_{1<d\le x}\frac{1}{d} - \sum_{\substack{1<d\le 18\\\mu(d)\neq-1}}\frac{1}{d}
< \log x - 1.09.
$$
Substituting this and \eqref{eq:lm} back into \eqref{eq:lma} gives
\begin{equation*}
\sum_{n\le x} \frac{\varphi(n)}{n} \le \frac{x}{\zeta(2)} + \frac{1}{x} + 1 + \log x - 1.09 < \frac{x}{\zeta(2)} + \log x.
\end{equation*}
Similarly, direct computation shows that
$$
\sum_{d\le x}\frac{\mu(d)}{d}\left\{\frac{x}{d}\right\} \le \sum_{\substack{d\le x\\\mu(d)=1}}\frac{1}{d} \le \sum_{1<d\le x}\frac{1}{d} - \sum_{\substack{1<d\le 4\\\mu(d)\neq1}}\frac{1}{d}
< \log x -\frac{13}{12}.
$$
and thus
\begin{equation*}
\sum_{n\le x} \frac{\varphi(n)}{n} \ge \frac{x}{\zeta(2)} - \frac{1}{x} - 1 - \log x + \frac{13}{12} > \frac{x}{\zeta(2)} - \log x.
\end{equation*}
\end{proof}

\begin{lemma} \label{lma:phi/d2}
For $x\ge 1$, we have that
$$ \frac{\log x}{\zeta(2)} +1-\frac{\log2}{\zeta(2)} < \sum_{n\le x} \frac{\varphi(n)}{n^2} \le \frac{\log x}{\zeta(2)} + 1.$$
\end{lemma}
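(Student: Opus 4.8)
The plan is to mirror the proof of Lemma~\ref{lma:phi/d}, starting from the convolution identity $\varphi(n)/n=\sum_{d\mid n}\mu(d)/d$. Writing $n=de$ and setting $H(y)=\sum_{m\le y}1/m$, I would first record
$$
\sum_{n\le x}\frac{\varphi(n)}{n^2}=\sum_{n\le x}\frac1n\sum_{d\mid n}\frac{\mu(d)}{d}=\sum_{d\le x}\frac{\mu(d)}{d^2}\sum_{e\le x/d}\frac1e=\sum_{d\le x}\frac{\mu(d)}{d^2}H(x/d).
$$
The essential difference from Lemma~\ref{lma:phi/d} is that the inner sum is now a harmonic sum rather than a floor, so the clean ``integer part minus fractional part'' split is unavailable and $H$ must be controlled more precisely.

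For this I would use the expansion $H(y)=\log y+\gamma+\rho(y)$, where $\gamma$ is Euler's constant and, by comparing the partial sum to $\int 1/t\,\rd t$ as in Lemma~\ref{lma:S2I} (so that $\rho(y)=\int_y^\infty\{t\}t^{-2}\,\rd t-\{y\}/y$), one has $|\rho(y)|<1/y$, with $\{\ \}$ the fractional part. Substituting and separating the three resulting pieces gives
$$
\sum_{n\le x}\frac{\varphi(n)}{n^2}=(\log x+\gamma)\sum_{d\le x}\frac{\mu(d)}{d^2}-\sum_{d\le x}\frac{\mu(d)\log d}{d^2}+\sum_{d\le x}\frac{\mu(d)\rho(x/d)}{d^2}.
$$
The last sum is an error term bounded in absolute value by $\tfrac1x\sum_{d\le x}1/d=H(x)/x$, which is $O((\log x)/x)$. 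Completing the first two sums to $d=\infty$ introduces the constant $C:=\gamma/\zeta(2)-\zeta'(2)/\zeta(2)^2$, using $\sum_d\mu(d)/d^2=1/\zeta(2)$ and $\sum_d\mu(d)\log d/d^2=\zeta'(2)/\zeta(2)^2$, at the cost of tails $\sum_{d>x}\mu(d)/d^2$ and $\sum_{d>x}\mu(d)\log d/d^2$, which I would bound explicitly by $O(1/x)$ and $O((\log x)/x)$ via Lemma~\ref{lma:S2I} (the function $\log t/t^2$ being decreasing for $t\ge e$). This produces
$$
\sum_{n\le x}\frac{\varphi(n)}{n^2}=\frac{\log x}{\zeta(2)}+C+\mathcal E(x),\qquad |\mathcal E(x)|\le \frac{c\log x}{x}
$$
for an explicit constant $c$ and all $x$ past an explicit threshold $x_0$.

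It then remains to check that $C$ lies strictly inside $\bigl(1-(\log 2)/\zeta(2),\,1\bigr)$; numerically $C\approx 0.697$ while the bracket is $(0.579,\,1)$, so there is room on both sides, the lower gap $\approx 0.119$ being the tighter one. Choosing $x_0$ so that the explicit bound on $|\mathcal E(x)|$ is below this gap yields both inequalities for $x\ge x_0$. For $1\le x<x_0$ the quantity $\sum_{n\le x}\varphi(n)/n^2$ takes only finitely many values, being constant on each $[n,n+1)$, so I would verify the two bounds directly at the integer endpoints, using that the two bounding functions are increasing in $x$. The main obstacle is the lower bound: with slack only about $0.12$, the crux is making the tail and $\rho$-error estimates sharp enough (or pushing the finite verification far enough) that the uniform error stays within this margin for every $x\ge 1$; the upper bound, whose slack is $\approx 0.30$ and for which equality is forced at $x=1$, is comparatively routine.
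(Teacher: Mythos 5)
Your argument is correct, but it takes a genuinely different route from the paper's. The paper deduces Lemma \ref{lma:phi/d2} from Lemma \ref{lma:phi/d} by partial summation: it writes $\sum_{n\le x}\varphi(n)/n^2$ in terms of $\sum_{n\le t}\varphi(n)/n$, inserts the bounds $t/\zeta(2)\pm\log t$, and after a direct check for $x<40$ obtains the stronger sandwich $\log x/\zeta(2)+0.58<\sum_{n\le x}\varphi(n)/n^2<\log x/\zeta(2)+0.82$ for $x\ge40$. You instead rework the Dirichlet convolution from scratch, expand the inner harmonic sum as $\log y+\gamma+\rho(y)$ with $|\rho(y)|<1/y$ (your exact formula for $\rho$ is right), and identify the true limiting constant $C=\gamma/\zeta(2)-\zeta'(2)/\zeta(2)^2\approx0.697$, which does lie in the required bracket $(0.579,\,1)$ and is consistent with the paper's $(0.58,0.82)$. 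What your route buys is independence from Lemma \ref{lma:phi/d} and an error term that actually tends to zero, so your additive constants converge to the truth, whereas the paper's window of width $0.24$ does not shrink; the price is a larger finite verification, since the natural explicit bounds on the two tails and the $\rho$-sum give roughly $|\mathcal E(x)|\le(3\log x+3)/x$, which only drops below the critical lower-bound gap of about $0.118$ around $x_0\approx200$, versus the paper's threshold of $40$. Both routes end with a finite check at integers; when you do yours, test the strict lower bound on each interval $[N,N+1)$ at its right endpoint (the paper notes this bound is tight as $x\to2^-$, so there is no slack there), while the non-strict upper bound is tested at left endpoints and is tight at $x=1$.
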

\begin{proof}
The inequalities are easily verified for $x<40$, so assume $x\ge40$.
 Partial summation gives
$$
\sum_{n\le x}\frac{\varphi(n)}{n^2}=\sum_{n\le39}\frac{\varphi(n)}{n^2}+\frac1x\sum_{n\le x}\frac{\varphi(n)}{n}-\frac1{40}\sum_{n\le 39}\frac{\varphi(n)}{n}+\int_{40}^x\frac1{t^2}\sum_{n\le t}\frac{\varphi(n)}{n}\,\rd t.
$$
Evaluating the two sums to 39 and using the upper and lower bounds in Lemma \ref{lma:phi/d} 
for the sums to $x$ and $t$, we obtain the stronger result,
$$ \frac{\log x}{\zeta(2)} +0.58 < \sum_{n\le x} \frac{\varphi(n)}{n^2}< \frac{\log x}{\zeta(2)} + 0.82.$$
Note that the upper bound in the lemma is tight at
$x=1$ and the lower bound cannot be improved as
$x\to2^-$.
\end{proof}

\begin{lemma} \label{lm:pc}
If $2\le y<x$ and $0<c<1$, then 
$$\sum_{y<p\le x} p^{-c} < f(x,y),$$
where 
$$
f(x,y):=(1+2.3\cdot10^{-8})\bigg(\Li(x^{1-c}) -\Li(y^{1-c}) + \frac{y^{1-c}}{\log y}\bigg) - \vartheta(y)\frac{y^{-c}}{\log y}.
$$
\end{lemma}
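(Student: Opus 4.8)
The plan is to express the sum as a Riemann--Stieltjes integral against $\rd\vartheta$, integrate by parts, and then feed in an explicit upper bound for $\vartheta$. Set $g(t) = t^{-c}/\log t$, so that $p^{-c} = g(p)\log p$ and hence $\sum_{y<p\le x}p^{-c} = \int_{y^+}^x g(t)\,\rd\vartheta(t)$, the $y^+$ indicating that the interval is half-open. The engine behind the shape of $f$ is the identity $\frac{\rd}{\rd t}\Li(t^{1-c}) = \frac{t^{-c}}{\log t} = g(t)$, which follows from the chain rule together with $\Li'(u)=1/\log u$; thus $\int_y^x g(t)\,\rd t = \Li(x^{1-c}) - \Li(y^{1-c})$, the main term predicted by the prime number theorem.

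First I would integrate by parts against the centered function $B(t) := \vartheta(t) - \vartheta(y)$, which vanishes at $t=y$ and so kills the lower boundary term:
\[
\sum_{y<p\le x}p^{-c} = g(x)\big(\vartheta(x)-\vartheta(y)\big) + \int_y^x \big(\vartheta(t)-\vartheta(y)\big)\big(-g'(t)\big)\,\rd t.
\]
A direct computation gives $-g'(t) = t^{-c-1}\big(\tfrac{c}{\log t} + \tfrac1{(\log t)^2}\big) > 0$ for $t\ge 2$, so $g$ is positive and decreasing on $[y,x]$. Since both the boundary coefficient $g(x)$ and the integrand weight $-g'(t)$ are nonnegative, I may replace $\vartheta(t)$ and $\vartheta(x)$ by any pointwise upper bound without reversing the inequality.

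Next I would invoke an explicit estimate of the form $\vartheta(t)\le(1+\eta)t$ with $\eta = 2.3\cdot10^{-8}$, valid throughout the range of $t$ in play; this is the sole source of the constant $1+2.3\cdot10^{-8}$ in $f$. Substituting $\vartheta(x)\le(1+\eta)x$ and $\vartheta(t)\le(1+\eta)t$ and expanding, the remaining integrals are elementary: $\int_y^x(-g'(t))\,\rd t = g(y)-g(x)$ and, after one further integration by parts, $\int_y^x t(-g'(t))\,\rd t = yg(y)-xg(x)+\int_y^x g(t)\,\rd t$. The decisive feature is the cancellation: every term carrying the factor $\vartheta(y)g(x)$ cancels, as do the two $\pm(1+\eta)xg(x)$ terms, leaving precisely $(1+\eta)\big(yg(y) + \int_y^x g\big) - \vartheta(y)g(y)$. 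Recognizing $yg(y)=y^{1-c}/\log y$, $\vartheta(y)g(y)=\vartheta(y)y^{-c}/\log y$, and $\int_y^x g = \Li(x^{1-c})-\Li(y^{1-c})$ converts this into exactly $f(x,y)$, with the strictness of the bound on $\vartheta$ delivering the strict inequality.

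The main obstacle I anticipate is not the calculus but the uniformity of the $\vartheta$-bound: one must guarantee $\vartheta(t)\le(1+\eta)t$ for \emph{every} $t\in[y,x]$, and in fact for $y$ as small as $2$. For moderate $t$ this follows from the sharper (computationally verified) inequality $\vartheta(t)<t$, since $\vartheta(t)/t$ stays below $1$ there, while for large $t$ it rests on an unconditional explicit bound; one should confirm that $\eta=2.3\cdot10^{-8}$ is admissible across the entire range in which the lemma is later applied. A secondary technical point is the Riemann--Stieltjes boundary behavior at $t=y$ when $y$ is prime, which the centered function $B$ is designed to finesse.
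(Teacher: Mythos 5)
Your proposal is correct and follows essentially the same route as the paper: partial summation against $\rd\vartheta$, the uniform bound $\vartheta(t)<(1+\epsilon)t$ with $\epsilon=2.3\times10^{-8}$ (which the paper notes holds for all $t>0$ by combining $\vartheta(t)<t$ for $t\le10^{19}$ with the B\"uthe estimate beyond), and a second integration by parts producing $\Li(x^{1-c})-\Li(y^{1-c})$. Your centering at $\vartheta(y)$ is only a bookkeeping variant of the paper's boundary term $-\vartheta(y)f(y)$, and the cancellation you describe reproduces the paper's final expression exactly.
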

\begin{proof} 
We use the inequalities
\begin{equation}\label{eq:epnt}
\vartheta(x)<x \quad(0< x\le 10^{19}),\qquad|x-\vartheta(x)|<\epsilon x\quad(x>10^{19}),
\end{equation}
where $\epsilon=2.3\times10^{-8}$, see \cite{Bue2}, \cite{Bue}, improving on recent work in \cite{PT} (also  see \cite[Proposition 2.1]{LP}).
Let $f(t)=1/(t^c\log t)$.  By partial summation,
$$
\sum_{y<p\le x}p^{-c}=\sum_{y<p\le x}f(p)\log p
=\vartheta(x)f(x)-\vartheta(y)f(y)-\int_y^x\vartheta(t)f'(t)\,\rd t.
$$
Note that \eqref{eq:epnt} implies that $\vartheta(t)<(1+\epsilon)t$ for all $t>0$.
Since $f'(t)<0$ for $t\ge2$, we have
\begin{align*}
\sum_{y<p\le x}p^{-c}&<(1+\epsilon)xf(x)-(1+\epsilon)\int_y^xtf'(t)\,\rd t-\vartheta(y)f(y)\\
&=
(1+\epsilon)\left(\Li(x^{1-c})-\Li(y^{1-c})+yf(y)\right)-\vartheta(y)f(y),
\end{align*}
where we have integrated by parts and used that $\int f(t)\,dt=\Li(t^{1-c})$.  This completes the proof.
\end{proof}
%First note that
%\begin{align*}
%\int_y^x\frac{c}{t^c\log t}\;\rd t & = c\,\Li(t^{1-c})\Big]_y^x \le \Li(t^{1-c})\Big]_y^x
%\end{align*}
%and
%\begin{align*}
%\int_y^x\frac{1}{t^c\log^2 t}\;\rd t & = - \frac{t^{1-c}}{\log t}\Big]_y^x.
%\end{align*}
%By partial summation,
%\begin{align*}
%\sum_{y<p\le x} p^{-c} & = \sum_{y<p\le x} \log p \Big( \frac{p^{-c}}{\log p}\Big)\\
%& \le \vartheta(x)\frac{x^{-c}}{\log x} - \vartheta(y)\frac{y^{-c}}{\log y} + \int_y^x \vartheta(t)\Big(\frac{ct^{-1-c}\log t + t^{-1-c}}{\log^2 t} \Big)\;\rd t.
%\end{align*}
% and by \cite[Cor.\ 2]{PT}, we have $\vartheta(x) < (1+7.5\cdot10^{-7})x$ for all $x>0$, so
%\begin{align*}
%\sum_{y<p\le x} p^{-c} & \le (1+7.5\cdot10^{-7})\bigg(\frac{x^{1-c}}{\log x} + \int_y^x\Big(\frac{c}{t^c\log t} + \frac{1}{t^c\log^2 t}\Big)\;\rd t\bigg) - \vartheta(y)\frac{y^{-c}}{\log y}\\
%& = (1+7.5\cdot10^{-7})\bigg(\frac{x^{1-c}}{\log x} + \Big[\Li(t^{1-c}) - \frac{t^{1-c}}{\log t}\Big]_y^x \bigg) - \vartheta(y)\frac{y^{-c}}{\log y}\\
%& = (1+7.5\cdot10^{-7})\bigg(\Li(x^{1-c}) -\Li(y^{1-c}) + \frac{y^{1-c}}{\log y}\bigg) - \vartheta(y)\frac{y^{-c}}{\log y}.
%\end{align*}
%\end{proof}

\begin{lemma} \label{lma:ntail}
We have
\begin{align*}
    (i)\;\sum_{n>y}\frac{1}{n^2} & < \frac{5}{3y}&\text{for }y>0,\\
    (ii)\;\sum_{n \ge y}\frac{1}{n^3} &\le \frac{4(\zeta(3)-1)}{y^2}&\text{for }y>1.
\end{align*}
\end{lemma}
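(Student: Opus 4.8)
The plan is to exploit that in each inequality the left-hand side is a step function of $y$ while the right-hand side is strictly decreasing in $y$; hence it suffices to verify each bound at the ``jump points'', i.e.\ at integer values of the summation threshold. Concretely, for part $(i)$ set $m=\lfloor y\rfloor+1$, the least integer exceeding $y$, so that $\sum_{n>y}n^{-2}=\sum_{n\ge m}n^{-2}$ is constant on $y\in[m-1,m)$; since $5/(3y)>5/(3m)$ throughout this interval, it is enough to prove $\sum_{n\ge m}n^{-2}\le 5/(3m)$ for every integer $m\ge1$. Similarly, for part $(ii)$ set $m=\lceil y\rceil\ge2$ (as $y>1$), so that $\sum_{n\ge y}n^{-3}=\sum_{n\ge m}n^{-3}$ on $y\in(m-1,m]$, and it suffices to prove $\sum_{n\ge m}n^{-3}\le 4(\zeta(3)-1)/m^2$ for every integer $m\ge2$.

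For the reduced form of $(i)$ I would treat $m=1$ separately: here $\sum_{n\ge1}n^{-2}=\zeta(2)=\pi^2/6<5/3$, using $\pi^2<10$. For $m\ge2$ I would use the telescoping bound
\[
\sum_{n\ge m}\frac{1}{n^2}<\sum_{n\ge m}\frac{1}{n^2-\frac14}=\sum_{n\ge m}\left(\frac{1}{n-\frac12}-\frac{1}{n+\frac12}\right)=\frac{1}{m-\frac12},
\]
and then check that $1/(m-\tfrac12)\le 5/(3m)$ is equivalent to $m\ge5/4$, which holds.

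For the reduced form of $(ii)$, the bound is tight at $m=2$, where $\sum_{n\ge2}n^{-3}=\zeta(3)-1=4(\zeta(3)-1)/2^2$ gives equality; the single further small case $m=3$, where $\sum_{n\ge3}n^{-3}=\zeta(3)-\tfrac98$, I would verify numerically against $4(\zeta(3)-1)/9$. For $m\ge4$ I would apply the integral comparison of Lemma \ref{lma:S2I}, namely $\sum_{n\ge m}n^{-3}\le m^{-3}+\int_m^\infty t^{-3}\,\rd t=m^{-3}+\tfrac12 m^{-2}$, and observe that the desired inequality reduces to $1/m+\tfrac12\le 4(\zeta(3)-1)\approx0.808$, which holds precisely once $m\ge4$.

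The main obstacle is not the asymptotic regime, where crude integral estimates suffice, but the smallest values of $m$, where the bounds are essentially sharp: the constants $5/3$ in $(i)$ and $4(\zeta(3)-1)$ in $(ii)$ are forced by the cases $m=1$ and $m=2$ respectively, and $(ii)$ is in fact an equality at $y=2$. The care needed is to confirm that the coarse telescoping and integral bounds, which fail for the first one or two integers, become valid exactly from the threshold onward, so that the handful of exceptional cases can be closed by direct evaluation of $\zeta(2)$ and $\zeta(3)$.
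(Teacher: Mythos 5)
Your proof is correct. For part (ii) it is essentially the paper's argument: both treat the small thresholds (the intervals $1<y\le4$ in the paper, the integers $m\in\{2,3\}$ for you) by direct evaluation of the tail of $\zeta(3)$, noting the bound is an equality at the first threshold, and then close the remaining range with the integral comparison $\sum_{n\ge m}n^{-3}\le m^{-3}+\tfrac12 m^{-2}$ from Lemma \ref{lma:S2I}; your preliminary reduction to integer jump points is a cosmetic repackaging of the paper's interval-by-interval case analysis. The one genuine difference is part (i): the paper gives no argument, simply citing (4.7) of \cite{KP}, whereas you supply a self-contained proof via the telescoping bound $\sum_{n\ge m}(n^2-\tfrac14)^{-1}=(m-\tfrac12)^{-1}$ together with $\zeta(2)<5/3$ for $m=1$. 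That argument checks out (the inequality $1/(m-\tfrac12)\le 5/(3m)$ does reduce to $m\ge 5/4$, and the passage from $5/(3m)$ to the strict bound $5/(3y)$ for $y<m$ is sound), and it has the small advantage of making the lemma independent of the external reference.
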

\begin{proof}
The first claim is stated and proved in (4.7) in \cite{KP}. We proceed similarly for the second claim.
When $1< y\le 2$, we have
\begin{align*}
    \sum_{n \ge y}\frac{1}{n^3} = \sum_{n \ge 2}\frac{1}{n^3} = \zeta(3) - 1 = \frac{4(\zeta(3)-1)}{4} \le \frac{4(\zeta(3)-1)}{y^2}.
\end{align*}
When $2< y\le 3$, direct computation shows that
\begin{align*}
    \sum_{n \ge y}\frac{1}{n^3} = \sum_{n \ge 3}\frac{1}{n^3} = \zeta(3) - 1 - \frac{1}{8}< \frac{4(\zeta(3)-1)}{y^2}.
\end{align*}
When $3< y\le 4$, direct computation shows that
\begin{align*}
    \sum_{n \ge y}\frac{1}{n^3} = \sum_{n \ge 4}\frac{1}{n^3} = \zeta(3) - 1 - \frac{1}{8} - \frac{1}{27} <\frac{4(\zeta(3)-1)}{y^2}.
\end{align*}
When $y>4$, by Lemma \ref{lma:S2I}, direct computation shows that
\begin{align*}
    \sum_{n \ge y}\frac{1}{n^3} \le \frac{1}{y^3} + \int_y^\infty \frac{\rd t}{t^3} = \frac{1}{y^3} + \frac{1}{2y^2}<\frac{4(\zeta(3)-1)}{y^2}.
\end{align*}

\end{proof}

\section{The basic method}
Let
$$\textbf{F}(n) = \{b\in(\Z/n\Z)^\times : b^{n-1}=1\}$$
and let $F(n)=\#\textbf{F}(n)$. If $n>1$ is odd, then $\pm1\in \textbf{F}(n)$. Thus, for these $n$, $F(n)-2$ counts the number of integers $b$, $1<b<n-1$, with $b^{n-1}\equiv1\pmod{n}$. Also note that by Fermat's little theorem, $F(p)=p-1$ for primes $p$. We thus have for $x\ge 5$,
\begin{equation}
P(x) = \frac{\sum'_{n\le x} (F(n)-2)}{\sum_{1<n\le x,\, n \text{ odd}} (F(n)-2)} = \bigg(1+\frac{\sum_{2<p\le x} (p-3)}{\sum'_{n\le x} (F(n)-2)} \bigg)^{-1}.
\end{equation}
Hence to obtain an upper bound for $P(x)$, we shall be interested in obtaining a lower bound for $\sum_{2<p\le x} (p-3)$ and an upper bound for $\sum'_{n\le x} (F(n)-2)$. To this end, we shall prove two theorems.
\begin{theorem} \label{thm:num}
For $x\ge 2657$, we have
$$\sum_{2<p\le x} (p-3) > \frac{x^2}{2\log x-\frac{1}{2}}.$$
\end{theorem}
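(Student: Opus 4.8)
The plan is to estimate $\sum_{2<p\le x}p$ by partial summation against $\vartheta$ and then peel off the lower-order correction $3(\pi(x)-1)$, since $\sum_{2<p\le x}(p-3)=\sum_{2<p\le x}p-3(\pi(x)-1)$. Writing $p=(p/\log p)\log p$ and summing by parts with $\phi(t)=t/\log t$, one obtains the identity
\[
\sum_{p\le x}p=\frac{x\,\vartheta(x)}{\log x}-\int_2^x\vartheta(t)\,\frac{\log t-1}{\log^2 t}\,\rd t ,
\]
and then $\sum_{2<p\le x}p=\sum_{p\le x}p-2$. The integrand weight $(\log t-1)/\log^2 t$ is positive for $t\ge3$, so to bound the whole expression \emph{from below} I need a lower bound for $\vartheta(x)$ in the main term and an upper bound for $\vartheta(t)$ inside the subtracted integral.

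For the upper bound I would use $\vartheta(t)<(1+\epsilon)t$ for all $t>0$, which follows from \eqref{eq:epnt} exactly as in the proof of Lemma \ref{lm:pc}. For the main term I need $\vartheta(x)$ from below: for $x>10^{19}$ the second inequality in \eqref{eq:epnt} gives $\vartheta(x)>(1-\epsilon)x$, while for $2657\le x\le 10^{19}$ I would invoke a matching explicit lower bound of the shape $\vartheta(x)>x\bigl(1-c/\log x\bigr)$ with $c$ small, available from the same sources underlying \eqref{eq:epnt}. Using $\int_2^x \frac{t}{\log t}\,\rd t=\Li(x^2)-\Li(4)$ together with the companion evaluation coming from $\frac{\rd}{\rd t}\frac{t^2}{2\log t}=\frac{t}{\log t}-\frac{t}{2\log^2 t}$, the integral is expressible through $\Li(x^2)$ and $x^2/\log x$; the $x^2/\log x$ pieces cancel against the main term, and the estimate collapses to
\[
\sum_{2<p\le x}p>\Li(x^2)-E(x),
\]
where $E(x)$ consists of an $O(1)$ constant together with an error of size $O\bigl(\epsilon\,x^2/\log x\bigr)$ for large $x$, resp.\ $O\bigl(c\,x^2/\log^2 x\bigr)$ in the intermediate range.

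It then remains to bound the correction $3(\pi(x)-1)$ above by a crude explicit estimate $\pi(x)<1.3\,x/\log x$, which is only $O(x/\log x)$, and to prove the elementary inequality $\Li(x^2)>\frac{x^2}{2\log x-\frac12}$ with a little room to spare. Expanding both sides gives $\Li(x^2)=\frac{x^2}{2\log x}+\frac{x^2}{4\log^2 x}+\cdots$ and $\frac{x^2}{2\log x-1/2}=\frac{x^2}{2\log x}+\frac{x^2}{8\log^2 x}+\cdots$, so the leading terms cancel and a positive gap $\sim\frac{x^2}{8\log^2 x}$ survives. I would make this rigorous by comparing derivatives: $\frac{\rd}{\rd x}\Li(x^2)=\frac{x}{\log x}$ exceeds the derivative of the right-hand side for $x$ large, so the difference is increasing and one only checks positivity at the left endpoint of each subrange.

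The main obstacle is the tightness of the constant $\tfrac12$: the target lies only $\approx\frac{x^2}{8\log^2 x}$ below $\Li(x^2)$, so every secondary contribution must be measured against this one term. In particular the lower bound on $\vartheta$ must have relative error $o(1/\log x)$ with constant $c<\tfrac18$ — a merely linear estimate such as $\vartheta(x)>0.9x$ would already wreck the main term — which is why a strong explicit bound is indispensable and why the valid range starts at a moderate $x$ rather than at $3$. For the smallest values $2657\le x\le X_0$, with $X_0$ the point where the clean asymptotic inequality takes over, the margin (while positive) is comparable to the $3(\pi(x)-1)$ correction, so I would simply verify the claim by computing $\sum_{2<p\le x}(p-3)$ directly.
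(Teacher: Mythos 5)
Your proposal follows essentially the same route as the paper: partial summation against $\vartheta$, the explicit bounds \eqref{eq:epnt}--\eqref{eq:epnt2} (the paper uses $\vartheta(x)>x-2\sqrt{x}$ for $x\le10^{19}$, which comfortably meets your correctly identified $c<\tfrac18$ precision requirement), reduction to comparing $\li(x^2)$ with $x^2/(2\log x-\tfrac12)$, and a direct check for small $x$. The argument is sound and matches the paper's proof in all essentials.
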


\begin{theorem} \label{thm:denom}
Suppose $c$, $L_1$, and $L$ are arbitrary real numbers with $0<c<1$, 
$1<L_1<L$.  Then for any $x>L^2$, we have
$$\sideset{}{'}\sum_{n\le x} (F(n)-2) < x^{c+1}\prod_{2<p\le L}\big(1-p^{-c}\big)^{-1}+x^2B,$$
where 
%$f_0 = \prod_{i=2}^{11} \big( 1-p_i^{-c}\big)^{-1}$,
\begin{align*}
%f(a,b) & = (1+7.5\cdot10^{-7})\bigg(\Li(a^{1-c}) -\Li(b^{1-c}) + \frac{b^{1-c}}{\log b}\bigg) - \vartheta(b)\frac{b^{-c}}{\log b},\\
%A & = \frac{1}{2c-1}\Big(\frac{1}{2}+\frac{1}{3(37^c - 1)}\Big)\Big(36^{1-2c} - \frac{1}{2}\cdot37^{1-2c}\Big),\text{ and}\\
B  &= 
\frac{1}{4L_1} +
\frac{\log L_1}{\zeta(2)}\Big(\frac1{2(L-1)}+\frac1{x^{1/2}}\Big)+\frac{.5}{L-1}+\frac{.8}{x^{1/2}}\\
%\Big(\frac{1}{x^{1/2}}+\frac{1}{L-1}\Big)\Big(\frac{\log L_1}{2\zeta(2)} + .8\Big) 
&\quad+ \frac{L_1}{(x^{1/2}-1)^2}
+ \frac{(1+\log L_1)}{2(x^{1/2}-1)} + \frac{1}{(L-1)^2}\Big(\frac{L_1}{\zeta(2)} + \log L_1\Big).
\end{align*}
\end{theorem}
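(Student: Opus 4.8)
The plan is to start from the multiplicative formula
$$F(n)=\prod_{p\mid n}\gcd(n-1,p-1),$$
valid for odd $n$: since $(\Z/p^a\Z)^\times$ is cyclic of order $p^{a-1}(p-1)$ and $\gcd(n-1,p)=1$, the number of $(n-1)$-th roots of unity mod $p^a$ is $\gcd(n-1,p-1)$, and the Chinese Remainder Theorem multiplies these. Dropping the harmless $-2$ (it only weakens an upper bound), it suffices to bound $\sideset{}{'}\sum_{n\le x}F(n)$ from above, and I would split the odd composite $n\le x$ according to whether or not they are $L$-smooth.

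For the $L$-smooth $n$ I would use nothing about $F$ beyond $F(n)\le\varphi(n)\le n\le x$, together with the sparseness of smooth numbers. Rankin's trick gives, for $c\in(0,1)$,
$$\#\{n\le x:\ n\text{ odd},\ P^+(n)\le L\}\le x^c\sum_{\substack{n\text{ odd}\\ P^+(n)\le L}}n^{-c}=x^c\prod_{2<p\le L}\big(1-p^{-c}\big)^{-1},$$
and multiplying by $F(n)\le x$ produces exactly the first term $x^{c+1}\prod_{2<p\le L}(1-p^{-c})^{-1}$.

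The substantive work is the non-$L$-smooth part. Writing $P=P^+(n)>L$ and $m=n/P$ (odd, $1<m<x/L$), the key simplification is that $n-1=Pm-1\equiv m-1\pmod{P-1}$, whence $\gcd(n-1,P-1)=\gcd(m-1,P-1)$; bounding the remaining factors by $\prod_{p\mid m}(p-1)\le\varphi(m)$ gives $F(n)\le\varphi(m)\gcd(m-1,P-1)$. Summing over the factorization and relaxing to all primes $P\in(L,x/m]$ (which only inflates the estimate, as the true factorization of each $n$ contributes one nonnegative term $\ge F(n)$) reduces the task to bounding $\sum_{m}\varphi(m)\sum_{L<P\le x/m}\gcd(m-1,P-1)$. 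I would expand $\gcd(m-1,P-1)=\sum_{d\mid(m-1,\,P-1)}\varphi(d)$ and bound the number of $P\le x/m$ with $P\equiv1\pmod d$ simply by the number of \emph{integers} in that class, i.e.\ by $\le(x/m)/d$; this crude count is the incarnation of ``few smooth numbers'' that sidesteps any primes-in-progressions input. Because $x>L^2$ forces $x^{1/2}>L$, I would split the range of $P$ at $x^{1/2}$ (which, since $Pm\le x$, also controls the size of $m$) and split the $m$- (or divisor-) range at the auxiliary threshold $L_1$, retaining the $\varphi$-weights only where it pays to.

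The remaining step is pure bookkeeping: each resulting inner sum collapses to one of the shapes $\sum_{m\le L_1}\varphi(m)/m$, $\sum_{m\le L_1}\varphi(m)/m^2$, $\sum_{m\le L_1}1/m$, or a tail $\sum_{m>L_1}1/m^2$, multiplied by endpoint factors $\tfrac1{L-1}$, $\tfrac1{(L-1)^2}$, $\tfrac1{x^{1/2}-1}$, $\tfrac1{(x^{1/2}-1)^2}$ arising from the lower limit $P>L$ and the split at $x^{1/2}$ via the sum--integral comparison of Lemma \ref{lma:S2I}. Feeding these into Lemmas \ref{lma:phi/d} and \ref{lma:phi/d2} (with argument $L_1$) yields $\tfrac{L_1}{\zeta(2)}+\log L_1$ and $\tfrac{\log L_1}{\zeta(2)}$ plus an explicit constant, the harmonic estimate yields $1+\log L_1$, and the tail bound of Lemma \ref{lma:ntail} yields the $\tfrac1{4L_1}$ type term; these assemble into $x^2B$. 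I expect the main obstacle to be precisely this assembly: keeping every constant explicit while the two cutoffs $L_1$ and $x^{1/2}$ interact, checking that the coarse count $\le(x/m)/d$ is still sharp enough that every surviving term is $O(x^2/L)$ or smaller, and verifying that dropping both the constraint $P\ge P^+(m)$ and the ``$-2$'' never costs more than the slack already present in $B$.
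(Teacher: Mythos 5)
Your treatment of the $L$-smooth part is exactly the paper's: bound $F(n)-2\le n\le x^{1+c}n^{-c}$ and apply Rankin's trick to get $x^{c+1}\prod_{2<p\le L}(1-p^{-c})^{-1}$. But your non-smooth argument diverges from the paper in a way that leaves a genuine gap. The paper's key idea is to classify $n$ by $k=\varphi(n)/F(n)$: for $k>L_1$ one has $F(n)<n/L_1$ trivially, and summing $n-2$ over odd $n\le x$ gives the $\frac{1}{4L_1}$ term of $B$ (it does \emph{not} come from a tail bound as you suggest --- Lemma \ref{lma:ntail} is not used in this theorem at all); for $k\le L_1$, Lemma 2.4 of \cite{KP} forces $n\equiv1\pmod{(p-1)/d}$ with $d=(k,p-1)\le L_1$, so $n$ lies in a single residue class modulo the \emph{large} modulus $p(p-1)/d$, and only the boundedly many $d\le L_1$ ever occur. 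Your route instead expands $\gcd(m-1,P-1)=\sum_{d\mid(m-1,P-1)}\varphi(d)$ with $d$ ranging over \emph{all} divisors of $m-1$, and this is where the argument breaks: (1) there is no mechanism to control the contribution of large divisors $d$ --- large $d$ here corresponds to $F(n)$ \emph{close} to $\varphi(n)$ (near-Carmichael $n$), the opposite of the paper's $k>L_1$ case, so truncating the divisor range at $L_1$ does not reproduce the $\frac1{4L_1}$ term and leaves the near-Carmichael contribution unbounded except trivially by $\approx x^2/2$; and (2) the crude count of integers $\equiv1\pmod d$ costs an additive $+1$ per pair $(m,d)$, and $\sum_m\varphi(m)\sum_{d\mid m-1}\varphi(d)\cdot 1$ summed over $m$ up to $x/L$ is far too large (on the order of $x^2(\log x)/L$ at best after nontrivial averaging of divisor functions, and much worse pointwise), so the boundary terms do not assemble into the stated $B$.

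In short: the decomposition by the pair $(m,P)$ with a full divisor expansion of the gcd is a genuinely different, and here insufficient, substitute for the paper's decomposition by the index $k=\varphi(n)/F(n)$ together with the congruence $n\equiv p\pmod{p(p-1)/d}$, $d\le L_1$. To repair your argument you would essentially have to reintroduce that classification, at which point you recover the paper's sets $\mathbf S_{p,d}(x)$, the coprimality normalization $(u,d)=1$, the observation $2d\mid p-1$, and the three sums $S_1,S_2,S_3$ split at $p\le x^{1/2}$, which is where the explicit constants in $B$ actually come from via Lemmas \ref{lma:phi/d} and \ref{lma:phi/d2}.
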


Before proving Theorems \ref{thm:num} and \ref{thm:denom}, we state the main result of the section, which follows from these theorems.
\begin{theorem} \label{thm:prob}
Suppose $c$, $L_1$, and $L$ are arbitrary positive real numbers satisfying $0<c<1$ and $1<L_1<L$. Then for any $x>\max\{L^2, 2657\}$, we have $P(x)\le 1/(1+z^{-1})$ where
$$ z = \bigg(B+x^{c-1}\prod_{2<p\le L}\big(1-p^{-c}\big)^{-1}\bigg)\big(2\log x - \tfrac{1}{2}\big),$$
and $B$ is defined as in Theorem \ref{thm:denom}.
\end{theorem}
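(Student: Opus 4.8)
The plan is to derive the bound directly from the exact identity for $P(x)$ stated at the opening of the section together with the two preceding theorems. Abbreviate the prime contribution by $N=\sum_{2<p\le x}(p-3)$ and the composite contribution by $D=\sum'_{n\le x}(F(n)-2)$; then that identity reads $P(x)=(1+N/D)^{-1}$ and holds for $x\ge5$. Because $t\mapsto(1+t)^{-1}$ is decreasing for $t>0$, obtaining an upper bound for $P(x)$ is the same as obtaining a lower bound for $N/D$, and this is exactly what a lower bound on $N$ together with an upper bound on $D$ provides.

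First I would fix the range of validity. The identity needs $x\ge5$, Theorem~\ref{thm:num} needs $x\ge2657$, and Theorem~\ref{thm:denom} needs $x>L^2$, so the hypothesis $x>\max\{L^2,2657\}$ covers all three. Under these conditions I would insert the lower bound $N>x^2/(2\log x-\tfrac12)$ from Theorem~\ref{thm:num} and the upper bound $D<x^{c+1}\prod_{2<p\le L}(1-p^{-c})^{-1}+x^2B$ from Theorem~\ref{thm:denom}.

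The key step is the short computation that turns the ratio of these two bounds into the quantity $z$. Dividing the upper bound for $D$ by the lower bound for $N$ gives
$$\frac{D}{N}<\big(2\log x-\tfrac12\big)\,\frac{x^{c+1}\prod_{2<p\le L}(1-p^{-c})^{-1}+x^2B}{x^2},$$
and cancelling $x^2$ inside the fraction collapses the right-hand side to precisely
$$\Big(B+x^{c-1}\prod_{2<p\le L}(1-p^{-c})^{-1}\Big)\big(2\log x-\tfrac12\big)=z.$$
Hence $D/N<z$, so $N/D>z^{-1}$, and the monotonicity of $(1+t)^{-1}$ yields $P(x)=(1+N/D)^{-1}<(1+z^{-1})^{-1}$, as claimed.

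There is no serious obstacle: once Theorems~\ref{thm:num} and~\ref{thm:denom} are in hand, this statement is a bookkeeping corollary. The only point demanding care is keeping the inequalities oriented correctly — the lower bound must be used for $N$ and the upper bound for $D$ so that $D/N$ is made large (and $N/D$ correspondingly small), and one must then invert a second time to pass back to $P(x)$. These two successive inversions are the sole place where a direction error could slip in.
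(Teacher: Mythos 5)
Your proposal is correct and follows the same route as the paper, which simply states that Theorem~\ref{thm:prob} "follows from" Theorems~\ref{thm:num} and~\ref{thm:denom}: insert the lower bound for the prime sum and the upper bound for the composite sum into the identity $P(x)=(1+N/D)^{-1}$, cancel the $x^2$, and invert twice. The inequality directions are handled correctly, and the range condition $x>\max\{L^2,2657\}$ is justified exactly as the paper intends.
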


In principle the prime sum is much larger than the composite sum, so the probability $P(x)$ may be approximately viewed as their quotient. We remark that the prime sum in Theorem \ref{thm:num} is asymptotically equal to $x^2/(2\log x)$, so the result is close to  best possible. Additionally, in the application of Theorem \ref{thm:denom}, $L$ and $c$ are used as parameters for smoothness and Rankin's upper bound, respectively.

We now prove Theorem \ref{thm:num} using 
 \eqref{eq:epnt} and the additional inequalities  from \cite{Bue2}, \cite{Bue} that
\begin{equation}
\label{eq:epnt2}
\vartheta(x)>x-2\sqrt{x}\quad(1423\le x\le 10^{19}),\qquad \pi(x)<(1+\epsilon)\li(x)\quad(x\ge2),
\end{equation}
where $\li(x)=\int_0^x\rd t/\log t$ and $\epsilon=2.3\times10^{-8}$.
\begin{proof}[Proof of Theorem \ref{thm:num}]
Let $A=1500$.  By partial summation,
\begin{equation} \label{eq:main}
\begin{split}
\sum_{2<p\le x} (p - 3) &= 1-3\pi(x) + \sum_{p\le x} p\\
%&= 1-3\pi(x) + \sum_{p\le x} \log p \Big(\frac{p}{\log p}\Big)\\
&= 1 - 3\pi(x) +\sum_{2<p\le A}(p-3)+ \frac{x\vartheta(x)}{\log x}-\frac{A\vartheta(A)}{\log A} - \int_A^x \vartheta(t) \frac{\log t - 1}{\log^2 t}\;\rd t.
\end{split}
\end{equation}
By \eqref{eq:epnt2}, we have $-3\pi(x)>-3(1+\epsilon)\li(x)$.  Suppose that $A\le x\le 10^{19}$.  By \eqref{eq:epnt},
\eqref{eq:epnt2}, we have
\begin{align*}
\frac{x\vartheta(x)}{\log x}-\int_A^x\vartheta(t)\frac{\log t-1}{\log^2t}\,\rd t
&>\frac{x^2-2x^{3/2}}{\log x}-\int_A^x\frac t{\log t}-\frac t{\log^2t}\,\rd t\\
&=\li(x^2)-\frac{2x^{3/2}}{\log x}-\li(A^2)+\frac{A^2}{\log A}.
\end{align*}
Using these estimates in \eqref{eq:main}, we have
$$
\sum_{2<p\le x}(p-3)>\li(x^2)-\frac{2x^{3/2}}{\log x}-3(1+\epsilon)\li(x)+5\,875.
$$
It is now routine to verify
 the theorem for $17\,000\le x\le 10^{19}$.
Similar calculations with \eqref{eq:epnt}, \eqref{eq:epnt2} establish the theorem for $x>10^{19}$.
A simple check then verifies the theorem in the stated range.
\end{proof}

\section*{Proof of Theorem \ref{thm:denom}}

The bulk of the work is devoted to the proof of Theorem \ref{thm:denom}. The basic method is to divide the eligible $n$ into five parts, depending on the largest prime factor $P^+(n)$ as well as the quotient $\varphi(n)/F(n)$, indicating how close $n$ is to being a Carmichael number. We summarize this in the diagram below, which may help guide the reader through the proof.

\Tree[.{odd composite $n$} [.$L$-smooth \eqref{eq:first} ]
                                              [.{not $L$-smooth} [.{$\varphi/F$ small} \eqref{eq:second} ]
                                                                              [.{$\varphi/F$ large} [.{$P^+\le\sqrt{x}$} $S_1,\,S_2$ ]      
                                                                                                               [.$P^+>\sqrt{x}$ $S_3$ ]]]]

%\begin{align*}
%\text{odd composite }n & = L\text{-smooth} \quad+\qquad\qquad\qquad\qquad \text{not } L\text{-smooth}\\
%& = L\text{-smooth} \quad+\quad \varphi/F\text{ small} \quad+\qquad\qquad \varphi/F\text{ large}\\
%& = L\text{-smooth} \quad+\quad \varphi/F\text{ small} \quad+\quad P^+ \le \sqrt{x} \quad+\quad P^+ > \sqrt{x}\\
%& \qquad\quad\downarrow \qquad\qquad\qquad\quad \downarrow \qquad\qquad\qquad\quad \downarrow \qquad\qquad\qquad\quad \downarrow\\
%&\qquad\eqref{eq:first} \qquad\qquad \text{end of }\eqref{eq:second} \qquad\quad S_1, S_2 \qquad\qquad\qquad S_3
%\end{align*}

For any $x>L^2$ with $L>L_1>1$, we have
\begin{equation} \label{eq:denom}
\begin{split}
\sideset{}{'}\sum_{n\le x} (F(n) - 2) & = \sideset{}{'}\sum_{\substack{n\le x\\P^+(n)\le L}} (F(n) - 2) + \sideset{}{'}\sum_{\substack{n\le x\\P^+(n)> L}} (F(n) - 2)\\
& \le \sum_{\substack{n\le x\\P^+(n)\le L\\n\textnormal{ odd}}} n + \sideset{}{'}\sum_{\substack{n\le x\\P^+(n)> L}} F(n).
\end{split}
\end{equation}
For the first term in (\ref{eq:denom}), we have for any $0<c<1$, 
\begin{equation} \label{eq:first}
\sum_{\substack{n\le x\\P^+(n)\le L\\2\nmid n}} n  \le x^{1+c} \sum_{\substack{P^+(n)\le L\\2\nmid n}}\frac{1}{n^c} = x^{1+c}\prod_{2<p\le L} \big( 1-p^{-c}\big)^{-1}.
\end{equation}

\begin{remark}\label{rmk:Rankin} By approximating the logarithm of the Euler product in 
\eqref{eq:first} (with 2 included) using Lemma \ref{lm:pc} and the
method of \cite{KP}, we can write a closed,
numerically explicit upper bound on the distribution
of $L$-smooth numbers:   If $\frac12<c<1$ and $37\le L< x$, then
$$
\sum_{\substack{n\le x\\P^+(n)\le L}}1\le x^cf_0\exp(A+f(L,36)),
$$
where the notation $f(a,b)$ is defined in Lemma \ref{lm:pc} and
$$
f_0:=\prod_{p<37}\big(1-p^{-c}\big)^{-1},
\quad A:=\frac{1}{2c-1}\Big(\frac{1}{2}+\frac{1}{3(37^c - 1)}\Big)\Big(36^{1-2c} - \frac{1}{2}\cdot37^{1-2c}\Big).
$$
There has been a very recent improvement of this Rankin-type upper bound due to
Granville and Soundarajan, see \cite[Theorem 5.1]{LP}, that is suitable for numerical
estimates.  It would be interesting to adapt that method to this paper.
\end{remark}

Now we bound the second term in (\ref{eq:denom}). Since $\textbf{F}(n)$ is a subgroup of $(\Z/n\Z)^\times$, by Lagrange's Theorem we have $F(n)\mid\varphi(n)$, where $\varphi$ is Euler's function. Then for each $k$, it makes sense to define $\textbf{C}_k(x)$ as the set of odd, composite $n\le x$ such that $F(n)=\varphi(n)/k$. Let $\textbf{C}'_k(x)$ be the set of $n\in \textbf{C}_k(x)$ for which $P^+(n)>L$, and let $C'_k(x)=\#\textbf{C}'_k(x)$. Thus, we have
\begin{equation} \label{eq:second}
\begin{split}
\sideset{}{'}\sum_{\substack{n\le x\\P^+(n)> L}} F(n) & = \sum_{k=1}^\infty \sum_{n\in \textbf{C}'_k(x)} F(n) = \sum_{k=1}^\infty \sum_{n\in \textbf{C}'_k(x)} \frac{\varphi(n)}{k}\\
& = \sum_{k\le L_1}\frac{1}{k} \sum_{n\in \textbf{C}'_k(x)} \varphi(n) + \sum_{k> L_1}\frac{1}{k} \sum_{n\in \textbf{C}'_k(x)} \varphi(n)\\
& \le x\sum_{k\le L_1}\frac{C'_k(x)}{k} + \frac{1}{L_1}\sum_{\substack{1<n\le x\\n\text{ odd}}} (n-2)
 \le x\sum_{k\le L_1}\frac{C'_k(x)}{k} + \frac{x^2}{4L_1}.
\end{split}
\end{equation}
It will thus be desirable to obtain an upper bound for $\sum_{k\le L_1}\frac{C'_k(x)}{k}$.
We remark that in the case $k>L_1$ we do not use $P^+(n)>L$; this observation will be
useful in the next section.

Given a prime $p>L$, $d\mid p-1$, let 
$$\mathbf S_{p,d}(x)=\{n:n\le x \text{ odd, composite},\;n\equiv p\kern-5pt\pmod{ p(p-1)/d}\}.$$
Let $S_{p,d}(x) = \#\mathbf S_{p,d}(x)$. Note that $S_{p,d}\le \frac{xd}{p(p-1)}$ by the Chinese Remainder Theorem.
We prove that
$$\bigcup_{k\le L_1} \mathbf C_k'(x) \subset \bigcup_{\substack{d\le L_1\\d\mid p-1\\L<p\le x}} \mathbf S_{p,d}(x).$$
Take $n$ in the left set. Then $p=P^+(n)>L$ and $k=\varphi(n)/F(n)\le L_1$. By Lemma 2.4 in \cite{KP}, we have 
$n\equiv1\pmod{\frac{p-1}{(k,p-1)}}$. Letting $d=(k,p-1)$, we have that $n\in \mathbf S_{p,d}$ 
(via the Chinese remainder theorem) and $d\le k\le L_1$, so $n$ is in the right set.

Additionally, for a given $p,d$ pair, $S_{p,d}$ counts integers $n = mp$ for which $m\equiv1\pmod{\frac{p-1}{d}}$. 
Write $m=1+u(\frac{p-1}{d})$ for some $u$. Letting $g=(u,d)$, we have that $m=1+(\frac{u}{g})(\frac{p-1}{d/g})$, 
so $n\in \mathbf S_{p,d/g}$, meaning that $n$ will be counted multiple times if $g>1$. 
Thus we require $(u,d)=1$. In particular, if $d$ is even, then $u$ is odd. 
Since $m=1+u(\frac{p-1}{d})$ is odd, we have $u(\frac{p-1}d)$ even. That is, if $d$ is even then $u$ is odd
and $\frac{p-1}d$ is even, so $2d\mid p-1$.  On the other hand, if $d$ is odd, we of course have $2d\mid p-1$.
Thus $2d\mid p-1$ always, and so
\begin{equation} \label{eq: tertiary}
\begin{split}
\sum_{k\le L_1}  \frac{C'_k(x)}{k} &\le \sum_{d\le L_1}\frac{1}{d}\sum_{\substack{L<p\le x\\2d\mid p-1}}\sum_{\substack{u\le \frac{xd}{p(p-1)}\\(u,d)=1}}1\\
& = \sum_{d\le L_1}\frac{1}{d}\sum_{\substack{L<p\le x^{1/2}\\2d\mid p-1}}\sum_{\substack{u\le \frac{xd}{p(p-1)}\\(u,d)=1}}1 + \sum_{d\le L_1}\frac{1}{d}\sum_{\substack{x^{1/2}<p\le x\\2d\mid p-1}}\sum_{\substack{u\le \frac{xd}{p(p-1)}\\(u,d)=1}}1\\
& < \sum_{d\le L_1}\frac{\varphi(d)}{d}\sum_{\substack{L<p\le x^{1/2}\\2d\mid p-1}}\Big(\frac{x}{p(p-1)}+1\Big) + \sum_{d\le L_1}\sum_{\substack{x^{1/2}<n\le x\\2d\mid n-1}}\frac{x}{n(n-1)}\\
&<S_1+S_2+S_3,
\end{split}
\end{equation}
where
\begin{equation}
\label{eq:defS}\begin{split}
%\begin{align}\label{eq:defS}
S_1&=
 x\sum_{d\le L_1}\frac{\varphi(d)}{d}\sum_{\substack{L<n\le x^{1/2}\\2d\mid n-1}}\frac{1}{(n-1)^2},\quad
S_2=  \sum_{d\le L_1}\frac{\varphi(d)}{d}\sum_{\substack{1<n\le x^{1/2}\\2d\mid n-1}}1,\\
S_3&=  \sum_{d\le L_1}\sum_{\substack{x^{1/2}<n\le x\\2d\mid n-1}}\frac{x}{(n-1)^2}.
\end{split}\end{equation}
It is worth noting that in $S_1,S_2,S_3$, we have dropped the condition that $n$ be prime. An alternative bound using the condition of primality may be handled as an application of the Brun-Titchmarsh inequality. However, such a method is less effective for the small values of $x$ considered here.

Consider $S_1$ in \eqref{eq:defS}. For a given $d\le L_1$, by Lemma \ref{lma:S2I} we have that
\begin{equation}
\begin{split}
\label{eq:s1}
\sum_{\substack{n>L\\2d\mid n-1}}\frac{1}{(n-1)^2}  = \sum_{2du+1>L}\frac{1}{4d^2u^2} &\le \frac{1}{(L-1)^2} + \frac{1}{4d^2}\int_{(L-1)/2d}^\infty\frac{\rd t}{t^2}\\
& = \frac{1}{(L-1)^2} + \frac{1}{2d(L-1)}.
\end{split}
\end{equation}
Thus, by Lemma \ref{lma:phi/d} and Lemma \ref{lma:phi/d2}, 
%the first term in (\ref{eq: tertiary}) is bounded by
\begin{equation} \label{eq:one}
\begin{split}
S_1 &< x\sum_{d\le L_1}\frac{\varphi(d)}{d} \Big( \frac{1}{(L-1)^2} + \frac{1}{2d(L-1)}\Big)\\
& \le \frac{x}{(L-1)^2}\big(\frac{L_1}{\zeta(2)} + \log L_1\big) + \frac{x}{2(L-1)}\Big(\frac{\log L_1}{\zeta(2)} + 1\Big).
\end{split}
\end{equation}

By Lemma \ref{lma:phi/d2}, $S_2$
in \eqref{eq:defS} is bounded by
\begin{equation} \label{eq:two}
S_2\le \sum_{d\le L_1}\frac{\varphi(d)}{d}\frac{x^{1/2}}{2d} \le x^{1/2}\Big(\frac{\log L_1}{2\zeta(2)} + .8\Big).
\end{equation}

We now consider $S_3$ in \eqref{eq:defS}. For a fixed $d\le L_1$, we have, as in \eqref{eq:s1},
\begin{equation*}
\sum_{\substack{x^{1/2}<n\le x\\2d\mid n-1}}\frac{1}{(n-1)^2} \le\frac1{(x^{1/2}-1)^2}+\frac1{2d(x^{1/2}-1)}.
\end{equation*}
So,
\begin{equation} \label{eq:three}
S_3\le
x\sum_{d\le L_1}\frac{1}{(x^{1/2}-1)^2} + \frac{1}{2d(x^{1/2}-1)}
 \le \frac{xL_1}{(x^{1/2}-1)^2} + \frac{x(1+\log L_1)}{2(x^{1/2}-1)}.
\end{equation}
By (\ref{eq:one}), (\ref{eq:two}), and (\ref{eq:three}), we obtain from \eqref{eq: tertiary} that
\begin{equation} \label{eq: tert}
\sum_{k\le L_1} \frac{C'_k(x)}{k} < x\Big(B-\frac{1}{4L_1}\Big)
\end{equation}
for $B$ as in Theorem \ref{thm:denom}. Thus, using (\ref{eq: tert}) in (\ref{eq:second}) gives the following result.
\begin{theorem} \label{thm:second}
Suppose $L_1$ and $L$ are arbitrary real numbers satisfying $1<L_1<L$. Then for any $x>L^2$, we have
$$\sideset{}{'}\sum_{\substack{n\le x\\P^+(n)> L}} (F(n) - 2) < x^2B.$$
where $B$ is as in Theorem \ref{thm:denom}.
\end{theorem}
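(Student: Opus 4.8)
The plan is to assemble the estimates already set up in equations \eqref{eq:denom} through \eqref{eq: tert}. First I would discard the $-2$ in each summand using the second inequality of \eqref{eq:denom}, which reduces the task to bounding $\sideset{}{'}\sum_{n\le x,\,P^+(n)>L} F(n)$. The purpose of this reduction is that $\mathbf{F}(n)$ is a subgroup of $(\Z/n\Z)^\times$, so by Lagrange's theorem $F(n)\mid\varphi(n)$; writing $F(n)=\varphi(n)/k$ then lets me organize the sum according to the index $k=\varphi(n)/F(n)$, which measures how close $n$ is to being Carmichael.

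Next I would split the sum over $k$ at the threshold $L_1$, exactly as in \eqref{eq:second}. For $k>L_1$ I use only the trivial bound $F(n)=\varphi(n)/k\le (n-2)/L_1$ together with the elementary estimate $\sum_{1<n\le x,\,n\text{ odd}}(n-2)\le x^2/4$, which produces the clean term $x^2/(4L_1)$; notably this piece does not require the hypothesis $P^+(n)>L$. The remaining contribution is $x\sum_{k\le L_1}C'_k(x)/k$, so the entire remaining problem is to bound this quantity by $x\big(B-\tfrac1{4L_1}\big)$, which is precisely \eqref{eq: tert}.

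The heart of the argument, and the step I expect to be the main obstacle, is the structural containment $\bigcup_{k\le L_1}\mathbf C'_k(x)\subset\bigcup\mathbf S_{p,d}(x)$. Here I would invoke the congruence $n\equiv 1\pmod{(p-1)/(k,p-1)}$ (Lemma 2.4 of \cite{KP}) with $p=P^+(n)>L$ and set $d=(k,p-1)\le L_1$, so that by the Chinese remainder theorem $n$ lies in one of the arithmetic progressions $\mathbf S_{p,d}$. The delicate bookkeeping is the multiplicity control: writing $n=mp$ with $m=1+u(p-1)/d$, I must impose $(u,d)=1$ to avoid counting $n$ under a smaller modulus, and then deduce that $2d\mid p-1$ in every case. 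The odd-$d$ case is immediate since $p-1$ is even, while the even-$d$ case follows because $(u,d)=1$ forces $u$ odd, and $m$ odd then forces $(p-1)/d$ even. This parity deduction is exactly what justifies summing over $u$ coprime to $d$, contributing the factor $\varphi(d)/d$ that appears throughout \eqref{eq: tertiary}.

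Finally I would split the prime range at $\sqrt x$ and bound the three sums $S_1,S_2,S_3$ of \eqref{eq:defS}. For $S_1$ and $S_3$ I apply Lemma \ref{lma:S2I} to the tails $\sum 1/(n-1)^2$ over the progressions $2d\mid n-1$, as in \eqref{eq:s1}, and then sum against $\varphi(d)/d$ using the mean-value bounds of Lemmas \ref{lma:phi/d} and \ref{lma:phi/d2}; $S_2$ is handled directly by Lemma \ref{lma:phi/d2}. Adding \eqref{eq:one}, \eqref{eq:two}, and \eqref{eq:three} gives \eqref{eq: tert}, and substituting this into \eqref{eq:second} yields the stated bound $x^2B$. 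The only subtlety in this last stage is keeping the constants consistent across the two regimes, namely retaining the $+1$ from the Chinese remainder theorem and the prime restriction (via $\varphi(d)/d$) when $p\le\sqrt x$, while dropping primality entirely when $p>\sqrt x$, so that every term of $B$ in Theorem \ref{thm:denom} is accounted for.
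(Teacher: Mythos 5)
Your proposal is correct and follows essentially the same route as the paper: the split at $k=L_1$ with the trivial $x^2/(4L_1)$ bound, the containment of $\bigcup_{k\le L_1}\mathbf C'_k(x)$ in the progressions $\mathbf S_{p,d}(x)$ via Lemma 2.4 of \cite{KP} with the $(u,d)=1$ and $2d\mid p-1$ bookkeeping, and the decomposition into $S_1,S_2,S_3$ at $\sqrt x$. The only slip is cosmetic: the factor $\varphi(d)/d$ is retained only in $S_1$ and $S_2$ (the $p\le\sqrt x$ regime), not in $S_3$, as your final sentence in fact acknowledges.
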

Thus, (\ref{eq:denom}), \eqref{eq:first}, and Theorem \ref{thm:second} give us Theorem \ref{thm:denom}.

\section{A refinement of the basic method}
We refine the basic method as done analogously in \cite{KP}, by considering the \emph{two} largest prime factors of $n$. 
This refinement provides a modest improvement over Theorem \ref{thm:prob}
for $x$ starting around $2^{140}$.

\begin{theorem} \label{thm:Denom}
Suppose $c,L_1,L$, and $M$ are arbitrary real numbers satisfying $0<c<1$, $10<L_1<L$, $2L<M<L^2$.
 Then for any $x>L^2$, we have
\begin{align*}
\sideset{}{'}\sum_{n\le x} (F(n)-2) < x^{c+1}
\big(1+f(L,M^{1/2})\big)\prod_{2<p\le M^{1/2}}\big(1-p^{-c} \big)^{-1} + x^2(B+C),
\end{align*}
where $f$ is as in Lemma \ref{lm:pc}, $B$ is as in Theorem \ref{thm:denom}, and
\begin{align*}
C  = &\frac{L^2}{2x}(1+\log L_1) + \frac{2(1+\log L_1)^2}{M\log M}\\
& + \frac{1}{12(M-2L)}(1+\log L)(4+\log L_1)^4\Big(\frac{5}{12} + (\zeta(3)-1)(1+\log L) \Big).
\end{align*}
\end{theorem}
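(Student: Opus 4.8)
The plan is to mirror the proof of Theorem~\ref{thm:denom}, refining the crude bound $F(n)-2\le n$ on the smooth part by isolating the largest prime factor. Writing $k=\varphi(n)/F(n)$, which is an integer since $F(n)\mid\varphi(n)$ (as noted before \eqref{eq:second}), I first peel off the contribution of $k>L_1$ exactly as in \eqref{eq:second}, namely $\sum_{k>L_1}\frac1k\sum_{n\in\mathbf{C}_k}\varphi(n)\le\frac1{L_1}\sum_{1<n\le x,\,n\text{ odd}}(n-2)\le x^2/(4L_1)$; crucially this step ignores the size of $P^+(n)$, so it applies to all odd composite $n$ and supplies the leading $\frac1{4L_1}$ of $B$. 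For the remaining $k\le L_1$ I split by $p=P^+(n)$ into $p>L$, $p\le M^{1/2}$, and $M^{1/2}<p\le L$. The range $p>L$ reproduces $S_1,S_2,S_3$ of \eqref{eq:defS} and hence \eqref{eq: tert}, so combined with the $k>L_1$ piece it exactly reproduces the proof of Theorem~\ref{thm:second} and contributes $x^2B$. The range $p\le M^{1/2}$ consists of $M^{1/2}$-smooth numbers, and Rankin's bound \eqref{eq:first} gives $x^{1+c}\prod_{2<p\le M^{1/2}}(1-p^{-c})^{-1}$, the first half of the main term.

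Everything new lives in the middle range $M^{1/2}<p\le L$, which I partition by the structure of $n$. In Sub-case A, where $p\,\|\,n$ and $n/p$ is $M^{1/2}$-smooth (so $n$ has a single prime factor exceeding $M^{1/2}$, to the first power), I again use $F(n)-2\le n$ and Rankin, factoring the sum uniquely as $n=pm$ to obtain $x^{1+c}\big(\sum_{M^{1/2}<p\le L}p^{-c}\big)\prod_{2<q\le M^{1/2}}(1-q^{-c})^{-1}$; bounding the inner prime sum by $f(L,M^{1/2})$ via Lemma~\ref{lm:pc} yields the second half of the main term. Sub-case B collects the rest: those $n$ whose two largest prime factors, counted with multiplicity, both exceed $M^{1/2}$. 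These are precisely the numbers the refinement is designed to save on, and the goal is to show $\sum_B(F(n)-2)\le x^2C$.

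For Sub-case B I exploit the multiplicativity $k=\prod_{p^a\|n}\varphi(p^a)/\gcd(n-1,\varphi(p^a))$ together with Lemma~2.4 of \cite{KP}. Applied to each of the two large primes $p\ge q\le L$, this forces $n\equiv1\pmod{(p-1)/d_p}$ and $n\equiv1\pmod{(q-1)/d_q}$, where $d_p=(k,p-1)$ and $d_q=(k,q-1)$ each divide $k\le L_1$; combined with $n\equiv0\pmod{pq}$, the Chinese remainder theorem confines $n$ to a progression of modulus $\asymp pq(p-1)(q-1)/(d_pd_q)$, whence the count of admissible $n\le x$ is $\lesssim xd_pd_q/(pq(p-1)(q-1))+1$. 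The prime-square configuration $p^2\mid n$ is handled separately, since there $p^{a-1}\mid k\le L_1$ forces $p\le L_1$, sharply restricting the range. Summing the progression counts over the two primes and over $d_p,d_q\le L_1$ with the $\varphi(d)/d$ weights, and estimating the resulting sums with Lemmas~\ref{lma:S2I}, \ref{lma:phi/d}, \ref{lma:phi/d2}, and \ref{lma:ntail} exactly as $S_1,S_2,S_3$ were handled, assembles $C$: the boundary contributions where the progression holds at most one admissible $n$ (so that $pq\le L^2$ dominates) give $\frac{L^2}{2x}(1+\log L_1)$, the main two-prime count gives the second term, and the prime-square piece together with the cubic tail of Lemma~\ref{lma:ntail}(ii)—summed against the constraint $pq>M$—accounts for the $\zeta(3)-1$ and the $(M-2L)$ denominator in the third term. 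Adding the five pieces gives $x^{1+c}(1+f(L,M^{1/2}))\prod_{2<p\le M^{1/2}}(1-p^{-c})^{-1}+x^2(B+C)$, as claimed.

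The main obstacle is the bookkeeping in Sub-case B. With two large primes one must carry out the analogues of the coprimality reductions $(u,d)=1$ and $2d\mid p-1$ from \eqref{eq: tertiary} simultaneously for $p$ and $q$, and account for the overlap $\gcd((p-1)/d_p,(q-1)/d_q)$ when merging the two congruences by CRT. Moreover, only $[d_p,d_q]\mid k$ rather than $d_pd_q\mid k$, so the reindexing $1/k\le 1/[d_p,d_q]$ must be performed carefully to keep the $\varphi(d)/d$ weights summable against Lemmas~\ref{lma:phi/d}--\ref{lma:phi/d2}. Pinning down these factors, and separating the genuinely cubic contributions that invoke Lemma~\ref{lma:ntail}(ii), is what fixes the precise powers $(1+\log L_1)^2$, $(4+\log L_1)^4$ and the numerical coefficients appearing in $C$.
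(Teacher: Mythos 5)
Your overall architecture matches the paper's: the paper also splits into (i) $P^+(n)>L$ or $\varphi(n)/F(n)>L_1$ (reusing Theorem \ref{thm:second} and the remark after \eqref{eq:second}, giving $x^2B$), (ii) $P\le L$ and $PQ\le M$ (a Rankin bound over $M^{1/2}$-smooth numbers plus one prime factor in $(M^{1/2},L]$, giving the main term with the factor $1+f(L,M^{1/2})$), and (iii) two large prime factors with $PQ>M$, which must produce $x^2C$. Your Sub-cases A and B are this same trichotomy in different clothing, and your accounting of the $x^{c+1}(1+f(L,M^{1/2}))\prod(1-p^{-c})^{-1}+x^2B$ portion is sound.

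The genuine gap is in Sub-case B, which is the heart of the theorem. First, the count you assert, $\lesssim xd_pd_q/(pq(p-1)(q-1))+1$, is not correct: the two congruences $n\equiv1\pmod{(p-1)/d_p}$ and $n\equiv1\pmod{(q-1)/d_q}$ have moduli that are generally far from coprime, so the combined modulus is governed by a least common multiple, not a product. The paper works instead with the single condition $\lambda(n)\mid k(n-1)$ from \cite{EP}, restricted to $\lambda(pq)=[p-1,q-1]$, yielding the density $(k,\lambda(pq))/(pq\lambda(pq))$ per progression; you flag the lcm issue as an ``obstacle'' but resolving it \emph{is} the proof. Converting $\sum_{k\le L_1}\frac1k\sum_{pq>M}\frac{(k,[p-1,q-1])}{pq[p-1,q-1]}$ into the third term of $C$ then requires the factorization device of (4.6) in \cite{KP} --- parametrizing by $u_1u_2u_3u_4=k$ and auxiliary variables $\mu,\nu,\delta$ --- followed by this paper's new step of splitting $\delta=1$ from $\delta\ge2$ and applying Lemma \ref{lma:ntail}, together with $\sum_{k\le L_1}\tau_{(4)}(k)/k\le\frac1{24}(4+\log L_1)^4$. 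None of this is in your outline. Second, your attribution of the terms of $C$ is off in a way that suggests the structure is not yet understood: the $\zeta(3)-1$ factor and the $M-2L$ denominator come from the $\delta\ge2$ tail in the \emph{distinct}-prime case, not from $p^2\mid n$; the prime-square case $P=Q$ forces $P\mid k$ and is what produces the separate middle term $2(1+\log L_1)^2/(M\log M)$, which you instead ascribe to ``the main two-prime count.'' As it stands the proposal reproduces the easy two-thirds of the theorem and defers the derivation of $C$.
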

\begin{proof}
For each odd, composite $n\le x$, letting $P,Q$ be the two largest prime factors of $n$ (i.e. $P=P^+(n), Q=P^+(n/P)$), we have three possible cases,
\begin{enumerate}
    \item[(i)] $P>L$ or $F(n)<\varphi(n)/L_1$,
    \item[(ii)] $P\le L$ and $PQ\le M$,
    \item[(iii)] $P\le L$, $PQ>M$, and $F(n)\ge\varphi(n)/L_1$.
\end{enumerate}
It is worth noting that cases (i) and (ii) are not in general mutually exclusive. We retain Theorem \ref{thm:second} and the remark following \eqref{eq:second}
to handle case (i). For case (ii), let $0<c<1$. When $P\le M^{1/2}$, we have
\begin{align*}
\sum_{\substack{n\le x, 2\nmid n\\P\le M^{1/2}}} 1  
\le x^c \sum_{\substack{2\nmid n\\P^+(n)\le M^{1/2}}}n^{-c}.
\end{align*}
Similarly, when $P> M^{1/2}$ we have $Q \le \frac{M}{P} < M^{1/2}$, so
\begin{align*}
\sum_{\substack{n\le x, 2\nmid n\\M^{1/2}<P\le L\\Q\le M^{1/2}}} 1 & \le \sum_{M^{1/2}<p\le L}\sum_{\substack{n\le x/p\\P^+(n)\le M^{1/2}\\2\nmid n}} 1 \le \sum_{M^{1/2}<p\le L}\sum_{\substack{P^+(n)\le M^{1/2}\\2\nmid n}}\Big(\frac{x}{np}\Big)^c\\
& = x^c\sum_{M^{1/2}<p\le L}p^{-c} \sum_{\substack{2\nmid n\\P^+(n)\le M^{1/2}}}n^{-c}.
\end{align*}

Using Lemma \ref{lm:pc},
\begin{equation}
\label{eq:calc}
\begin{split}
\sum_{\substack{n\le x, 2\nmid n\\P\le L\\PQ \le M}} 1 & \le \sum_{\substack{n\le x, 2\nmid n\\P\le M^{1/2}}} 1 + \sum_{\substack{n\le x, 2\nmid n\\M^{1/2}<P\le L\\Q\le M^{1/2}}} 1\\
& \le x^c \sum_{\substack{2\nmid n\\P^+(n)\le M^{1/2}}}n^{-c} + x^c\sum_{M^{1/2}<p\le L}p^{-c} \sum_{\substack{2\nmid n\\P^+(n)\le M^{1/2}}}n^{-c}\\
& = x^c\bigg(1 + \sum_{M^{1/2}<p\le L}\kern-2pt p^{-c}\bigg)\kern-4pt\sum_{\substack{2\nmid n\\P^+(n)\le M^{1/2}}}\kern-5pt n^{-c}
\le x^c\big(1+f(L,M^{1/2})\big)\kern-8pt\sum_{\substack{2\nmid n\\P^+(n)\le M^{1/2}}}\kern-5pt n^{-c}\\
& = x^c\big(1+f(L,M^{1/2})\big)\prod_{2<p\le M^{1/2}}\big(1-p^{-c}\big)^{-1}.
\end{split}
\end{equation}
We now have the following result.
\begin{theorem} \label{thm:ii}
If $0<c<1$, $1<L<x$, and $L<M<L^2$, then
$$\sum_{\substack{n\le x, n\textnormal{ odd}\\P\le L\\PQ\le M}} n \le x^{c+1}\big(1+f(L,M^{1/2})\big)\prod_{2<p\le M^{1/2}}\big(1-p^{-c}\big)^{-1},$$
where $f$ is as in Lemma \ref{lm:pc}.
\end{theorem}

Consider $n$ belonging to case (iii). For each $k$, let $\mathbf B_k(x)$ denote the set of such $n$ with 
$\varphi(n)/F(n)=k$ and let $B_k(x) = \#\mathbf B_k(x)$. Thus, 
\begin{equation} \label{eq:iii}
\sideset{}{'}\sum_{n~{\rm in~case~(iii)}} F(n) \le x\sum_{k\le L_1}\frac{B_k(x)}{k}.
%\sideset{}{'}\sum_{\substack{n\le x\\P\le L\\PQ > M}} F(n) \le x\sum_{k\le L_1}\frac{B_k(x)}{k}.
\end{equation}

By (2.11) in \cite{EP}, we have $\lambda(n)\mid k(n-1)$ for all $n\in \mathbf B_k(x)$. Since $PQ\mid n$, we have $\lambda(PQ)\mid \lambda(n)$, so $n$ satisfies the set of congruences
\begin{equation}\label{eq:cong}
n\equiv0\pmod{PQ},\quad k(n-1)\equiv0\pmod{\lambda(PQ)}.
\end{equation}
Suppose first that $P=Q$.  Then $\lambda(PQ)=P(P-1)$, so that \eqref{eq:cong} implies that $P\mid k$.
For such a prime $P$, the number of $n\le x$ with $P^2\mid n$ is at most $x/P^2<x/M$.  Thus, the contribution for
$n$
in this case is at most
\begin{equation}
\label{eq:q=p}
\frac xM\sum_{k\le L_1}\frac xk\sum_{\substack{P|k\\ P>M^{1/2}}}1<
\frac {x^2}M\Big(\sum_{k\le L_1}\frac1k\Big)\frac{\log L_1}{\log M^{1/2}} <\frac{2x^2}{M\log M}(1+\log L_1)^2.
\end{equation}
Now consider the case $P>Q$.
The latter congruence in \eqref{eq:cong} is equivalent to
\begin{equation*}
    n \equiv 1 \quad\bigg(\text{mod }\frac{\lambda(PQ)}{(k,\lambda(PQ))}\bigg),
\end{equation*}
from which we also note
\begin{equation*}
    \bigg(PQ\,,\,\frac{\lambda(PQ)}{(k,\lambda(PQ))}\bigg)=1.
\end{equation*}
Thus for arbitrary fixed primes $p>q$, the Chinese remainder theorem gives that the number of integers $n\le x$ satisfying the system $n\equiv0\pmod{pq}$, $k(n-1)\equiv0\pmod{\lambda(pq)}$ as in \eqref{eq:cong} is at most
\begin{equation*}
    1 + \frac{x(k,\lambda(pq))}{pq\lambda(pq)}.
\end{equation*}
%\begin{equation} \label{eq:Bk}
 %B_k(x) \le \sum_{\substack{q\le p < L\\pq>M}} \bigg(1 + \frac{x(k,\lambda(pq))}{pq\lambda(pq)}\bigg).
%\end{equation}
%We split up the sum in \eqref{eq:Bk} into cases, $p=q$ and $p>q$. When $p=q$, by (4.10) in \cite{KP} we have
%\begin{equation} \label{eq:q=p}
%\sum_{\substack{p < L\\p^2>M}} \bigg(1 + \frac{x(k,p-1)}{p^2(p-1)}\bigg) \le (1+\frac{x}{M})\log L_1.
%\end{equation}
%When $q<p$, we have that
Summing over choices for $p,q$, we have the number of $n$ in this case is at most
\begin{equation} \label{eq:q<p}
\sum_{\substack{q< p \le L\\pq>M}} \bigg(1 + \frac{x(k,\lambda(pq))}{pq\lambda(pq)}\bigg) \le \frac{1}{2}L^2 + \frac{1}{2}x\sum_{\substack{p,q \le L\\pq>M\\p\neq q}} \frac{(k,[p-1,q-1])}{pq[p-1,q-1]}.
\end{equation}
This is (4.4) in \cite{KP} where ``$L_2$" there is our ``$L$". 
Following the argument in \cite{KP} from there, and letting $M'=M-2L$ and with
$u_1,u_2,u_3,u_4,\mu,\nu,\delta$ positive integer variables, we have that
\begin{equation} \label{eq:dmove}
    \sum_{\substack{q, p \le L\\pq>M\\p\neq q}}  \frac{(k,[p-1,q-1])}{pq[p-1,q-1]} \le 
\sum_{\substack{u_1u_2u_3u_4=k\\(u_1,u_2)=1}}\kern2pt\sum_{\substack{\mu\le L/u_1\\\nu\le L/u_2}} \kern2pt\sum_{u_1u_2u_3^2\mu\nu\delta^2 > M'} \frac{1}{\mu^2\nu^2\delta^3u_1u_2u_3^2}.
\end{equation}
which is the initial inequality of (4.6) in \cite{KP} and with a typo corrected (the variable ``$\delta$" under
the second summation there should be ``$\mu$").

We now diverge from the argument in \cite{KP}, and split up the sum on the right side of \eqref{eq:dmove} into two cases, $\delta=1$ and $\delta\ge 2$. When $\delta=1$, by Lemma \ref{lma:ntail}(i) we have
\begin{equation} \label{eq:d=1}
\begin{split}
    \sum_{\substack{u_1u_2u_3u_4=k\\(u_1,u_2)=1}} \kern2pt \sum_{\substack{\mu\le L/u_1\\\nu\le L/u_2}}\kern2pt\sum_{\mu\nu u_1u_2u_3^2 > M'} \frac{1}{\mu^2\nu^2u_1u_2u_3^2} 
&< \frac{5}{3M'}\sum_{u_1u_2u_3u_4=k}\kern2pt\sum_{\nu\le L/u_2}\frac{1}{\nu}\\
    & \le \frac{5}{3M'}(1+\log L)\sum_{u_1u_2u_3u_4=k}1,
\end{split}
\end{equation}
When $\delta\ge 2$, let $D:= \sqrt{M'/u_1u_2u_3^2\mu\nu}$. By Lemma \ref{lma:ntail}(ii) we have
\begin{equation} \label{eq:dge2}
\begin{split}
    \sum_{\substack{u_1u_2u_3u_4=k\\(u_1,u_2)=1}}\sum_{\substack{\mu\le L/u_1\\\nu\le L/u_2}} \sum_{\delta \ge \max\{2,D\}} \frac{1}{\mu^2\nu^2\delta^3u_1u_2u_3^2}
     &\le \frac{4(\zeta(3)-1)}{M'}\sum_{u_1u_2u_3u_4=k}\sum_{\substack{\mu\le L/u_1\\\nu\le L/u_2}}\frac{1}{\mu\nu}\\
    & \le \frac{4(\zeta(3)-1)}{M'}(1+\log L)^2\sum_{u_1u_2u_3u_4=k}1.
\end{split}
\end{equation}

Substituting \eqref{eq:d=1} and \eqref{eq:dge2} back into \eqref{eq:dmove}
and then \eqref{eq:q<p}, we have
\begin{equation}
\label{eq:justabove}
\begin{split}
    \sum_{k\le L_1}\frac{1}{k} & \sum_{\substack{q< p \le L\\pq>M}} \bigg(1 + \frac{x(k,\lambda(pq))}{pq\lambda(pq)}\bigg) < \frac{1}{2}L^2(1+\log L_1)\\
    & + x(1+\log L)\Big(\frac{5}{6M'} + \frac{2(\zeta(3)-1)}{M'}(1+\log L) \Big)\sum_{k\le L_1}\frac{\tau_{(4)}(k)}{k},
\end{split}
\end{equation}
where $\tau_{(i)}(k)$ is the number of ordered factorizations of $k$ into $i$ positive factors. 
In \cite{KP} (see (4.9)), an easy induction argument shows that
\begin{equation*}
    \sum_{k\le y}\frac{\tau_{(i)}(k)}{k} \le \frac{1}{i!}(i+\log y)^i
\end{equation*}
for any natural number $i$ and any $y\ge 1$.  Using this in \eqref{eq:justabove}
and then combining with \eqref{eq:q=p} gives
\begin{equation*}
x\sum_{k\le L_1}\frac{B_k(x)}{k} \le x^2C,
\end{equation*}
where $C$ is
as in Theorem \ref{thm:Denom}.  
Thus, from (\ref{eq:iii}) we have the following result.
\begin{theorem} \label{thm:iii}
If $10<L_1<L<M/2$ and $x>L^2>M$, then
$$\sideset{}{'}\sum_{n~{\rm in~case~(iii)}}F(n) \le x^2C,$$
%$$\sideset{}{'}\sum_{\substack{n\le x\\P\le L\\PQ > M}} F(n) \le x^2C,$$
where $C$ is as in Theorem \ref{thm:Denom}.
\end{theorem}
Combining Theorems \ref{thm:second}, \ref{thm:ii} and \ref{thm:iii} yield Theorem \ref{thm:Denom}.
\end{proof}

Finally, Theorems \ref{thm:num} and \ref{thm:Denom} give the following result.
\begin{theorem} \label{thm:Prob}
If $0<c<1$, $10<L_1<L$, $2L<M<L^2<x$, and $x\ge2657$, then $P(x)\le 1/(1+z^{-1})$ where
$$ z =\bigg( x^{c-1}\big(1+f(L,M^{1/2})\big)\prod_{2<p\le M^{1/2}}\big(1-p^{-c}\big)^{-1}+B+C\bigg)
\big(2\log x - \tfrac{1}{2}\big),$$
$f$ is as in Lemma \ref{lm:pc}, $B$ is as in Theorem \ref{thm:denom}, and $C$ is as in Theorem \ref{thm:Denom}.
\end{theorem}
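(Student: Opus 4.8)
The plan is to combine the exact expression for $P(x)$ from the start of the basic-method section with the lower bound of Theorem \ref{thm:num} for the prime sum and the upper bound of Theorem \ref{thm:Denom} for the composite sum. Recall the identity
$$P(x) = \bigg(1+\frac{\sum_{2<p\le x}(p-3)}{\sum'_{n\le x}(F(n)-2)}\bigg)^{-1},$$
valid for $x\ge 5$. Since the map $t\mapsto (1+t)^{-1}$ is strictly decreasing on $(0,\infty)$, it suffices to bound the ratio $R(x):=\sum_{2<p\le x}(p-3)\big/\sum'_{n\le x}(F(n)-2)$ from below; a lower bound $R(x)>z^{-1}$ then yields at once $P(x)<1/(1+z^{-1})$, which is exactly the claimed inequality.

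To produce such a lower bound I would bound numerator and denominator separately. The hypotheses $0<c<1$, $10<L_1<L$, $2L<M<L^2<x$ are precisely those of Theorem \ref{thm:Denom} (note $x>L^2$ follows from $L^2<x$), so for the denominator we have
$$\sum'_{n\le x}(F(n)-2) < x^{c+1}\big(1+f(L,M^{1/2})\big)\prod_{2<p\le M^{1/2}}\big(1-p^{-c}\big)^{-1}+x^2(B+C).$$
Factoring out $x^2$ and writing $x^{c+1}=x^2\cdot x^{c-1}$, the right side becomes $x^2$ times
$$x^{c-1}\big(1+f(L,M^{1/2})\big)\prod_{2<p\le M^{1/2}}\big(1-p^{-c}\big)^{-1}+B+C,$$
and by the definition of $z$ this bracketed quantity is exactly $z/(2\log x-\tfrac{1}{2})$, so the denominator bound reads $x^2 z/(2\log x-\tfrac{1}{2})$. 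Meanwhile $x\ge 2657$ meets the hypothesis of Theorem \ref{thm:num}, giving the numerator lower bound $\sum_{2<p\le x}(p-3)>x^2/(2\log x-\tfrac{1}{2})$.

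Dividing the numerator bound by the denominator bound — legitimate since the composite sum is positive for $x\ge 2657$ — gives
$$R(x) > \frac{x^2/(2\log x-\tfrac{1}{2})}{x^2 z/(2\log x-\tfrac{1}{2})} = z^{-1},$$
and the monotonicity remark above completes the proof.

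I expect no genuine obstacle here, as this statement is essentially a corollary: all the analytic content resides in Theorems \ref{thm:num} and \ref{thm:Denom}. The only points requiring care are the routine bookkeeping needed to verify that the factored denominator bound reproduces exactly $z/(2\log x-\tfrac{1}{2})$, and confirming that every hypothesis of the two input theorems is implied by the hypotheses stated here (in particular $x>L^2$ and $x\ge 2657$, the latter also guaranteeing a nonzero composite sum so that the division is valid).
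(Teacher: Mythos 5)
Your proposal is correct and matches the paper's route exactly: the paper derives Theorem \ref{thm:Prob} as an immediate consequence of Theorems \ref{thm:num} and \ref{thm:Denom} via the identity $P(x)=\big(1+\sum_{2<p\le x}(p-3)/\sum'_{n\le x}(F(n)-2)\big)^{-1}$, just as you do. The bookkeeping factoring out $x^2$ and the hypothesis-checking are exactly the (unstated) content of the paper's one-line proof.
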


\section{The strong probable prime test}

The next theorem
extends the applicability of Theorems \ref{thm:prob} and \ref{thm:Prob} to the probability, $P_1(x)$, that an odd composite $n\le x$ passes the strong probable prime test to a random base. 
For an odd number $n$, let  $S(n)$ denote the number of integers
$1\le b\le n-1$ such that $n$ is a strong probable prime to the base
$b$, cf.\ \eqref{eq:spp}.  Thus,
$$
P_1(x)=\frac{\sideset{}{'}\sum_{\kern-3pt n\le x}\big(S(n)-2\big)}
{\sideset{}{'}\sum_{\kern-3pt n\le x}\big(S(n)-2\big)+\sideset{}{}\sum_{2<p\le x}(p-3)}.
$$
The following theorem together with Theorems \ref{thm:num}, \ref{thm:denom},
and \ref{thm:Denom} allows for a numerical estimation of $P_1(x)$ for various
values of $x$.

\begin{theorem}\label{thm:p01}
For $x\ge 1$, we have that 
$$
\sideset{}{'}\sum_{n\le x}\big(S(n)-2\big)\le
\frac12\sideset{}{'}\sum_{n\le x}\big(F(n)-2\big).
$$
\end{theorem}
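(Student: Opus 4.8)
The plan is to reduce everything to Monier's exact formula for the number of strong liars and to the dichotomy it produces according to whether $n$ is a prime power. Write $n=\prod_{j=1}^r p_j^{a_j}$ (odd, composite) and $n-1=2^st$ with $t$ odd. The Chinese remainder theorem identifies $(\Z/n\Z)^\times$ with $\prod_j(\Z/p_j^{a_j}\Z)^\times$, each factor cyclic of order $m_j=p_j^{a_j-1}(p_j-1)=2^{e_j}t_j$ with $t_j$ odd. Counting $t$-th roots of unity and solutions of $b^{2^it}=-1$ in each cyclic factor (the latter having $2^i\gcd(t,t_j)$ solutions when $i\le e_j-1$ and none otherwise), one obtains, with $g:=\prod_j\gcd(t,t_j)$ and $w:=\min\{s,e_1,\dots,e_r\}$,
\[
F(n)=g\cdot 2^{\sum_j\min(s,e_j)},\qquad S(n)=g\Big(1+\tfrac{2^{rw}-1}{2^r-1}\Big),
\]
cf.\ \cite{M}. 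In particular every strong liar is a Fermat liar and $\pm1$ lie in both counts, consistent with the ``$-2$'' normalization.

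From these formulas I would extract a pointwise dichotomy. Since $\sum_j\min(s,e_j)\ge rw$, one has $S(n)/F(n)\le 2^{-rw}\big(1+(2^{rw}-1)/(2^r-1)\big)$. When $r=1$ this equals $1$; indeed $S(n)=F(n)$, and here $F(n)=\gcd(p^a-1,p-1)=p-1$ for $n=p^a$. When $r\ge2$ the right-hand side is at most $\tfrac12$, the maximum being attained in the limiting case $r=2$, $w=1$. Hence $S(n)=F(n)$ for prime powers, while $S(n)\le\tfrac12F(n)$ whenever $n$ has at least two distinct prime factors; the latter gives $S(n)-2\le\tfrac12(F(n)-2)$ with room to spare.

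Summing the dichotomy and cancelling, it suffices to establish the purely combinatorial estimate
\[
\tfrac12\sum_{\substack{p^a\le x\\ a\ge2}}(p-3)\ \le\ N(x),
\]
where $N(x)$ counts the odd composite $n\le x$ with at least two distinct prime factors; the left side is exactly the ``deficit'' produced by the prime powers $p^a$ with $p\ge5$, each of which satisfies $S=F$ rather than $S\le\tfrac12 F$. I would prove this by bounding $N(x)$ below by the number of odd integers up to $x$, minus $1$, minus $\pi(x)$, minus the $O(\sqrt x)$ odd prime powers, so that $N(x)\ge \tfrac{x}{2}-\pi(x)-O(\sqrt x)$; the displayed sum is dominated by its $a=2$ term $\tfrac12\sum_{p\le\sqrt x}(p-3)$, which by a Lemma~\ref{lm:pc}--type prime estimate is $\sim x/(2\log x)$. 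For all large $x$ the $\tfrac{x}{2}$ term overwhelms this, and the finitely many small $x$ (those below the first relevant prime square $25$, together with a short range beyond) are checked directly.

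The main obstacle is precisely this last step: the clean pointwise bound $S(n)\le\tfrac12F(n)$ fails exactly on prime powers $p^a$ with $p\ge5$, where $S(n)=F(n)$, so the inequality cannot be proved term by term and must instead be rescued globally. The substance of the argument is therefore the verification that the comparatively sparse prime powers — whose count and size are controlled by the Chebyshev-type estimates already developed in the paper — contribute a deficit that is comfortably absorbed by the abundance, of order $x$, of genuine odd composites with two or more prime factors. The only care needed is to make the ``large $x$'' threshold explicit and to dispatch the small cases by hand.
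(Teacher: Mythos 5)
Your proposal is correct and follows essentially the same route as the paper: the pointwise dichotomy $S(n)\le 2^{1-\omega(n)}F(n)$ (so $S\le\tfrac12F$ for $\omega(n)\ge2$, while $S=F=p-1$ on odd prime powers), followed by the observation that the prime-power deficit $\tfrac12\sum_{p^a\le x,\,a\ge2}(p-3)$ is absorbed by the unit surplus coming from each odd composite with at least two prime factors. The only cosmetic differences are that you derive the key inequality from Monier's formula rather than citing it from \cite{DLP}, and you lower-bound the relevant composite count by $\tfrac x2-\pi(x)-O(\sqrt x)$ where the paper uses the simpler bound $\ge\tfrac x6-\tfrac32$ via multiples of $3$; both finish with an explicit threshold plus a direct check of small $x$, which the paper carries out at $x\ge254$.
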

\begin{proof}
By (2.1) in \cite{DLP}, we have that $S(n) \le 2^{1-\omega(n)}F(n)$, where $\omega(n)$ denotes the number of distinct prime factors of $n$. 
So, if $n$ is odd and divisible by at least 2 different primes, we have $S(n)\le\frac12F(n)$.  Further,
if $n=p^a$ is an odd prime power then $S(p^a) = F(p^a) = p-1$. 
Therefore we have
\begin{align*}
\sideset{}{'}\sum_{n\le x} \big(S(n)-2\big) & \le \sideset{}{'}\sum_{n\le x} \Big(\frac{1}{2}F(n)-2\Big) + \frac{1}{2}\sum_{\substack{2<p^a\le x\\a\ge2}}(p-1)\\
&=\frac12\sideset{}{'}\sum_{n\le x}\big(F(n)-2\big)-\sideset{}{'}\sum_{n\le x}1+\frac12\sum_{\substack{2<p\le x^{1/a}\\a\ge2}}(p-1),
\end{align*}
so to prove the theorem it is enough to show that
\begin{equation}
\label{eq:suff}
\sideset{}{'}\sum_{n\le x}1\ge\frac12\sum_{\substack{2<p\le x^{1/a}\\a\ge2}}(p-1).
\end{equation}
Since 3 times an odd integer $>1$ is an odd composite number, we have
$$
\sideset{}{'}\sum_{n\le x}1\ge\sum_{\substack{1<m\le x/3\\m\,{\rm odd}}}1=\left\lfloor\frac x6-\frac12\right\rfloor
>\frac16x-\frac32.
$$
Also, since the primes larger than 2 are odd, for a given value of $a$ we have
$$
\frac12\sum_{2<p\le x^{1/a}}(p-1)\le\sum_{j\le \frac12(x^{1/a}-1)}j\le\frac12\Big(\frac12x^{1/a}-1\Big)\Big(\frac12x^{1/a}+1\Big)<\frac18x^{2/a}.
$$
Adding these inequalities for $a=2,3,\dots,\lfloor\log x/\log 3\rfloor$, we see that \eqref{eq:suff}
will follow if we  show that
$$
\frac16x-\frac32>\frac18x+\frac18x^{2/3}+\frac18x^{1/2}(\log x/\log3-3).
$$
This inequality holds for $x\ge254$.
 For $9\le x<254$, \eqref{eq:suff} can be verified directly.
Indeed, the prime sum in \eqref{eq:suff} increases only at the 8 powers of
odd primes to 254 and it is enough to compute the two sums at those points.
For $x<9$,
$$\sideset{}{'}\sum_{n\le x} \big(F(n)-2\big)=\sideset{}{'}\sum_{n\le x} \big(S(n)-2\big)=0 ,$$
so the theorem holds here as well.
This completes the proof.
\end{proof}

We remark that the same result holds for the Euler probable prime test (also known as  the Solovay--Strassen
test).  This involves verifying that the odd number $n$ satisfies $a^{(n-1)/2}\equiv\left(\frac an\right)\pmod n$, where $\left(\frac an\right)$ is the Jacobi symbol.  Indeed, from Monier's formula, see \cite[(5.4)]{EP}, we
have that the number of bases $a$ (mod $n$) for which the Euler congruence holds is also $\le 2^{1-\omega(n)}F(n)$.
Like the strong test (as discussed in the introduction), an advantage with the Euler probable prime test is that more liars may be weeded out
by repeating the test.

\section{Numerical results}
We apply Theorems \ref{thm:prob} and \ref{thm:Prob} to obtain numerical bounds on $P(x)$ for various values of $x$. 
In Figure \ref{fig:up}, bounds on $P(2^k)$ are computed via Theorem \ref{thm:prob} for $40\le k \le 130$ and 
Theorem \ref{thm:Prob} for $140\le k \le 330$, at which point the methods of this paper lose their edge over those in \cite{KP}. To select values for parameters $L,L_1, M, c$, we started with an initial guess based on \cite{KP}, and then optimized each parameter in turn (holding the others fixed). The reported values were determined by repeated this process five times.

Note that the upper bounds in Theorems \ref{thm:prob}, \ref{thm:Prob} are
decreasing functions in $x$, so one can use the Figure \ref{fig:up} data
to compute upper bounds for values of $x$ between consecutive entries. 

We also compute the exact values of $P(x)$ for $x=2^k$ when $k\le 36$. By definition,
$$P(x) = \frac{S_c(x)}{S_c(x)+S_p(x)}$$
for
\begin{equation*}
    S_p(x) = \sum_{2<p\le x}(p-3),\quad S_c(x) = \sideset{}{'}\sum_{n\le x}\big(F(n)-2\big).
\end{equation*}
For ease, we have split up the computation into dyadic intervals $(2^{k-1},2^k)$. Letting 
\begin{equation*}
    S_p(x/2,x) = \sum_{x/2<p\le x}(p-3),\quad S_c(x/2,x) = \sideset{}{'}\sum_{x/2<n\le x}\big(F(n)-2\big),
\end{equation*}
we have that
\begin{equation}\label{eq:S2P}
P(2^k) = \frac{\sum_{j=3}^k S_c(2^{j-1},2^j)}{\sum_{j=3}^k S_p(2^{j-1},2^j)+ S_c(2^{j-1},2^j)}.
\end{equation}

\begin{figure}[H]
  \caption{Upper bound on $P(2^k)$.} \label{fig:up}
  \[\begin{array}{rccccc}
    k & L & L_1 & M^{1/2} & c &  P(2^k)\le \\
    \hline
    40 & 307^{-}      & 135        & & 0.5440 & 4.306E{-}1 \\
    50 & 727^{-} & 318 & & 0.5831 & 2.904E{-}1 \\
    60 & 1.860E{+}3 & 831 & & 0.6235 & 1.848E{-}1 \\
    70 & 4.000E{+}3 & 1.75E{+}3 & & 0.6491 & 1.127E{-}1 \\
    80 & 8.500E{+}3 & 3.72E{+3} & & 0.6704 & 6.728E{-}2 \\
    90 & 1.804E{+}4 & 7.55E{+}3 & & 0.6906 & 4.017E{-}2 \\
    100 & 3.505E{+}4 & 1.54E{+}4 & & 0.7052 & 2.388E{-}2 \\
    110 & 7.351E{+}4 & 3.27E{+}4 & & 0.7217 & 1.435E{-}2 \\
    120 & 1.354E{+}5 & 5.95E{+}4 & & 0.7321 & 8.612E{-}3 \\
    130 & 2.507E{+}5 & 1.10E{+}5 & & 0.7423 & 5.229E{-}3 \\
    140 & 9.90E{+}5 & 1.57E{+}5 & 2.379E{+}5 & 0.7444 & 3.265E{-}3\\ 
    150 & 2.20E{+}6 & 3.19E{+}5 & 3.739E{+}5 & 0.7504 & 1.799E{-}3\\ 
    160 & 4.88E{+}6 & 6.21E{+}5 & 5.689E{+}5 & 0.7554 & 9.932E{-}4\\ 
    170 & 1.05E{+}7 & 1.21E{+}6 & 8.669E{+}5 & 0.7602 & 5.505E{-}4\\ 
    180 & 2.21E{+}7 & 2.30E{+}6 & 1.315E{+}6 & 0.7648 & 3.064E{-}4\\ 
    190 & 4.55E{+}7 & 4.55E{+}6 & 1.990E{+}6 & 0.7692 & 1.714E{-}4\\ 
    200 & 9.23E{+}7 & 8.69E{+}6 & 2.990E{+}6 & 0.7734 & 9.634E{-}5\\ 
    210 & 1.84E{+}8 & 1.66E{+}7 & 4.455E{+}6 & 0.7773 & 5.447E{-}5\\ 
    220 & 3.62E{+}8 & 3.16E{+}7 & 6.627E{+}6 & 0.7811 & 3.097E{-}5\\ 
    230 & 7.19E{+}8 & 5.74E{+}7 & 9.644E{+}6 & 0.7845 & 1.770E{-}5\\ 
    240 & 1.38E{+}9 & 1.09E{+}8 & 1.410E{+}7 & 0.7878 & 1.017E{-}5\\ 
    250 & 2.62E{+}9 & 2.01E{+}8 & 2.049E{+}7 & 0.7911 & 5.876E{-}6\\ 
    260 & 4.96E{+}9 & 3.66E{+}8 & 2.946E{+}7 & 0.7941 & 3.412E{-}6\\ 
    270 & 9.29E{+}9 & 6.64E{+}8 & 4.204E{+}7 & 0.7969 & 1.992E{-}6\\ 
    280 & 1.73E{+}10 & 1.19E{+}9 & 5.998E{+}7 & 0.7996 & 1.169E{-}6\\ 
    290 & 3.16E{+}10 & 2.18E{+}9 & 8.558E{+}7 & 0.8023 & 6.888E{-}7\\ 
    300 & 5.83E{+}10 & 3.97E{+}9 & 1.197E{+}8 & 0.8048 & 4.080E{-}7\\ 
    310 & 1.06E{+}11 & 6.87E{+}9 & 1.678E{+}8 & 0.8072 & 2.428E{-}7\\ 
    320 & 1.90E{+}11 & 1.20E{+}10 & 2.346E{+}8 & 0.8094 & 1.451E{-}7\\ 
    330 & 3.38E{+}11 & 2.10E{+}10 & 3.297E{+}8 & 0.8117 & 8.713E{-}8\\ 
    %340 & 5.7191E{+}11 & L^{0.8975} & L^{1.4673} & 0.8138 & 5.9013E{-}8\\ 
    %350 & 1.0841E{+}12 & L^{0.8925} & L^{1.4561} & 0.8158 & 3.5683E{-}8\\

\end{array}\]
\end{figure}

\noindent
Note that the probability that an odd composite in the interval $(2^{k-1},2^k)$ passes the Fermat test is given by
\begin{equation*}
P(2^{k-1},2^k) := \frac{S_c(2^{k-1},2^k)}{S_p(2,^{k-1},2^k)+S_c(2,^{k-1},2^k)}.
\end{equation*}
We have directly computed $S_p(2^{k-1},2^k)$ and $S_c(2^{k-1},2^k)$ for $k\le 36$, with the latter computation aided by the formula $F(n)=\prod_{p\mid n}(p-1,n-1)$. Specifically, $S_p$ is computed directly from the available list of primes up to $2^{36}$. To compute $S_c$ we use a sieve-like procedure.
We initialize an array representing the odd numbers from $2^{k-1}$ and $2^k$ with all 1's.
For each prime $p$ to $2^k/3$, we let $m$ run over the odd numbers between $2^{k-1}/p$ and $2^k/p$.  
For each $m$, we locate $mp$ in the array, multiplying the
entry there by $\gcd(m-1,p-1)$.   At the end of the run the non-1 entries in our array correspond
to the numbers $F(n)$ for $n$ odd and composite.   Note this avoids factoring integers $n$ in $(2^{k-1},2^k)$, though a brute force method to the modest level of $2^{36}$ would have worked too.  

In Figure \ref{fig:exact}, we provide the values of $S_p(2^k)$ and $S_c(2^k)$, as well as $P(2^k)$ and $P(2^{k-1},2^k)$.

\begin{figure}[H]
  \caption{Exact values of data.} \label{fig:exact}
  \[\begin{array}{rllcc}
    k & S_p(2^{k-1},2^k) & S_c(2^{k-1},2^k) & P(2^{k-1},2^k) & P(2^k)\\
    \hline
    3 & 6 & 0 & 0 & 0\\
    4 & 18 & 2 & 1.000E{-}1 & 7.692E{-}2\\
    5 & 104 & 4 & 3.704E{-}2 & 4.478E{-}2\\
    6 & 320 & 24 & 6.977E{-}2 & 6.276E{-}2\\
    7 & 1180 & 114 & 8.810E{-}2 & 8.126E{-}2\\
    8 & 4292 & 316 & 6.858E{-}2 & 7.210E{-}2\\
    9 & 16338 & 1114 & 6.384E{-}2 & 6.605E{-}2\\
    10 & 57416 & 3056 & 5.054E{-}2 & 5.492E{-}2\\
    11 & 208576 & 10890 & 4.962E{-}2 & 5.109E{-}2\\
    12 & 780150 & 28094 & 3.476E{-}2 & 3.922E{-}2\\
    13 & 2837158 & 74528 & 2.600E{-}2 & 2.936E{-}2\\
    14 & 10673384 & 231514 & 2.123E{-}2 & 2.342E{-}2\\
    15 & 39467286 & 582318 & 1.454E{-}2 & 1.695E{-}2\\
    16 & 148222234 & 1636968 & 1.092E{-}2 & 1.254E{-}2\\
    17 & 559288478 & 4521166 & 8.019E{-}3 & 9.224E{-}3\\
    18 & 2106190104 & 11682336 & 5.516E{-}3 & 6.503E{-}3\\
    19 & 7995006772 & 33290330 & 4.147E{-}3 & 4.770E{-}3\\
    20 & 30299256236 & 88781082 & 2.922E{-}3 & 3.410E{-}3\\
    21 & 115430158810 & 230250774 & 1.991E{-}3 & 2.364E{-}3\\
    22 & 440353630422 & 628735800 & 1.426E{-}3 & 1.672E{-}3\\
    23 & 1683364186642 & 1680806136 & 9.975E{-}4 & 1.174E{-}3\\
    24 & 6448755473484 & 4408788648 & 6.832E{-}4 & 8.115E{-}4\\
    25 & 24754014371036 & 11552686982 & 4.665E{-}4 & 5.565E{-}4\\
    26 & 95132822935752 & 30756273488 & 3.232E{-}4 & 3.840E{-}4\\
    27 & 366232744269106 & 82133627362 & 2.242E{-}4 & 2.657E{-}4\\
    28 & 1411967930053822 & 215629423796 & 1.527E{-}4 & 1.820E{-}4\\
    29 & 5450257882815404 & 565834872742 & 1.038E{-}4 & 1.241E{-}4\\
    30 & 21065843780715212 & 1504267288346 & 7.140E{-}5 & 8.504E{-}5\\
    31 & 81507897575948416 & 3999812059436 & 4.907E{-}5 & 5.837E{-}5\\
    32 & 315718919767278610 & 10350692466866 & 3.278E{-}5 & 3.940E{-}5\\
    33 & 1224166825030041460 & 27472503360964 & 2.244E{-}5 & 2.682E{-}5\\
    34 & 4750936696054816476 & 72288538641772 & 1.522E{-}5 & 1.821E{-}5\\
    35 & 18454541611019193346 & 190806759987694 & 1.034E{-}5 & 1.237E{-}5\\
    36 & 71745407298862105164 & 498526567616818 & 6.949E{-}6 & 8.342E{-}6
\end{array}\]
\end{figure}

Additionally, we have estimated $P(2^k)$ in the range $30\le k \le 50$ using random sampling. More precisely, we randomly sample $\lfloor2^{k/2}\rfloor$ odd composite numbers in the interval $(2^{k-1},2^k)$, estimating $S_p(2^{k-1},2^k)$ by $$\widehat{S}_p(2^{k-1},2^k) = \int_{2^{k-1}}^{2^k}\frac{t-3}{\log t}\;\rd t = \Li(2^{2k}) - \Li(2^{2(k-1)}) - 3\big(\Li(2^{k}) - \Li(2^{k-1})\big),$$
in order to smooth out some noise from the experiment. To estimate $S_c(2^{k-1},2^k)$, we add up $F(n)-2$ for each odd composite $n$ sampled, and scale this sum by
$$\frac{2^{k-2} - \Li(2^k) + \Li(2^{k-1})}{2^{k/2}},$$
representing the ratio between the number of composites in the interval and the number of samples taken. We repeat this procedure ten times, and compute the mean, $\widehat{S}_{\text{mean}}(2^{k-1},2^k)$, and median, $\widehat{S}_{\text{median}}(2^{k-1},2^k)$, of the data. Using these statistics, we estimate $P(2^{k-1},2^k)$ by
\begin{align*}
\widehat{P}_{\text{mean}}(2^{k-1},2^k) &= \frac{\widehat{S}_{\text{mean}}(2^{k-1},2^k)}{\widehat{S}_p(2^{k-1},2^k) + \widehat{S}_{\text{mean}}(2^{k-1},2^k)},\\
\widehat{P}_{\text{median}}(2^{k-1},2^k) &= \frac{\widehat{S}_{\text{median}}(2^{k-1},2^k)}{\widehat{S}_p(2^{k-1},2^k) + \widehat{S}_{\text{median}}(2^{k-1},2^k)}.
\end{align*}
\noindent
For $30\le k\le 36$, $P(2^{k-1},2^k)$ is known, in which case we compute the relative errors, $\widehat{P}_{\text{mean}}/P-1$ and $\widehat{P}_{\text{median}}/P-1$, to get a sense of the accuracy of the experiment. 
Then we estimate $P(2^k)$ by 
$$\widehat{P}_{\text{mean}}(2^k) = \frac{\widehat{S}_{\text{mean}}(2^k)}{\widehat{S}_p(2^k) + \widehat{S}_{\text{mean}}(2^k)}$$
where
\[ \widehat{S}_{\text{mean}}(2^k) = \begin{cases} 
      S_c(2^{k-1}) + \widehat{S}_{\text{mean}}(2^{k-1},2^k) & \text{for }30\le k \le 36, \\
      S_c(2^{36}) + \sum_{j=37}^k \widehat{S}_{\text{mean}}(2^{j-1},2^j) & \text{for }37\le k \le 50,
   \end{cases}\]
and
\[ \widehat{S}_p(2^k) = \begin{cases} 
      S_p(2^{k-1}) + \widehat{S}_p(2^{k-1},2^k) & \text{for }30\le k \le 36, \\
      S_p(2^{36}) + \sum_{j=37}^k \widehat{S}_p(2^{j-1},2^j) & \text{for }37\le k \le 50.
   \end{cases}\]
\\   
\noindent   
Results of the random sampling experiment are summarized in Figures \ref{fig:known} and \ref{fig:unknown}.

One sees a negative bias in these data with the results of random sampling undershooting
the true figures.  The referee has pointed out to us that this may be due to Jensen's inequality
applied to the convex function $x/(a+x)$, so that ${\rm E}[X/(a+X)]\ge {\rm E}[X]/(a+{\rm E}[X])$.  The undershoot
may also be due to the fact that on average $F(n)$ is much larger than it is typically.
In fact, it is shown in \cite{EP} that 
on a set of asymptotic density 1, we have $F(n)=n^{o(1)}$, yet
the average behavior is $>n^{15/23}$.  The exponent $15/23$, after more recent work of
Baker and Harman \cite{BH}, can be replaced with $0.7039$.  It follows from an old conjecture of
Erd\H os on the distribution of Carmichael numbers that on average $F(n)$ behaves like $n^{1-o(1)}$.

\begin{figure}[H]
  \caption{Random sampling estimates in range where $P(2^k)$ is known.} \label{fig:known}
  \[\begin{array}{c|cc|cc|cc}
k & \widehat{P}_{\text{mean}}(2^{k-1},2^k) & \text{rel.\ err.} & \widehat{P}_{\text{median}}(2^{k-1},2^k) & \text{rel.\ err.}& \widehat{P}_{\text{mean}}(2^k) & \text{rel.\ err.}\\
\hline
30 & 5.541E{-}5 &-0.224 & 5.045E{-}5 &-0.293 & 7.319E{-}5 &-0.139\\
31 & 4.800E{-}5 &-0.022 & 3.616E{-}5 &-0.263 & 5.758E{-}5 &-0.014\\
32 & 2.706E{-}5 &-0.175 & 1.899E{-}5 &-0.421 & 3.515E{-}5 &-0.108\\
33 & 2.223E{-}5 &-0.009 & 1.248E{-}5 &-0.444 & 2.666E{-}5 &-0.006\\
34 & 1.387E{-}5 &-0.088 & 1.013E{-}5 &-0.334 & 1.721E{-}5 &-0.055\\
35 & 7.603E{-}6 &-0.265 & 6.506E{-}6 &-0.371 & 1.033E{-}5 &-0.165\\
36 & 4.433E{-}6 &-0.362 & 4.123E{-}6 &-0.407 & 6.474E{-}6 &-0.224
\end{array}\]
\end{figure}

\begin{figure}[H]
  \caption{Random sampling estimates in range where $P(2^k)$ is unknown.} \label{fig:unknown}
  \[\begin{array}{c|ccc}
k & \widehat{P}_{\text{mean}}(2^{k-1},2^k) & \widehat{P}_{\text{median}}(2^{k-1},2^k) & \widehat{P}_{\text{mean}}(2^k) \\
\hline
37 & 4.113E{-}6 & 3.675E{-}6 & 5.200E{-}6\\
38 & 4.807E{-}6 & 2.677E{-}6 & 4.908E{-}6\\
39 & 3.008E{-}6 & 1.463E{-}6 & 3.496E{-}6\\
40 & 1.519E{-}6 & 1.097E{-}6 & 2.026E{-}6\\
41 & 9.078E{-}7 & 5.697E{-}7 & 1.194E{-}6\\
42 & 7.747E{-}7 & 3.772E{-}7 & 8.822E{-}7\\
43 & 3.472E{-}7 & 2.334E{-}7 & 4.842E{-}7\\
44 & 1.968E{-}7 & 1.677E{-}7 & 2.704E{-}7\\
45 & 1.639E{-}7 & 1.687E{-}7 & 1.911E{-}7\\
46 & 1.186E{-}7 & 1.198E{-}7 & 1.372E{-}7\\
47 & 1.051E{-}7 & 6.597E{-}8 & 1.133E{-}7\\
48 & 4.076E{-}8 & 3.947E{-}8 & 5.928E{-}8\\
49 & 3.791E{-}8 & 3.213E{-}8 & 4.337E{-}8\\
50 & 2.361E{-}8 & 1.318E{-}8 & 2.865E{-}8
\end{array}\]
\end{figure}

\section*{Acknowledgments}
We gratefully acknowledge the many constructive comments of an anonymous referee.
The first-named author is grateful for support received from the Byrne Scholars Program and the James O. Freedman
Presidential Scholars program at Dartmouth College.

\bibliographystyle{amsplain}

\end{document}